\documentclass[a4,10pt]{article}

\usepackage{wrapfig}

\usepackage{lineno}

\usepackage{amsmath}
\usepackage{amsthm}
\usepackage{amssymb}
\usepackage{enumerate}
\usepackage[pdftex]{graphicx}
\usepackage{float}

\usepackage{url}
\usepackage[colorlinks=true, linkcolor=blue, citecolor=blue, urlcolor=blue]{hyperref}
\usepackage{cite}

\usepackage[top=36truemm,bottom=35truemm,left=32truemm,right=32truemm]{geometry}

\usepackage{titlesec}
\titleformat*{\section}{\large\bfseries}

\theoremstyle{definition}
\newtheorem{theorem}{Theorem}[section]

\newtheorem{lemma}[theorem]{Lemma}

\newtheorem{proposition}[theorem]{Proposition}
\newtheorem{remark}[theorem]{Remark}
\numberwithin{equation}{section}

\allowdisplaybreaks

\begin{document}
  \title{\Large{Analytic Study of $p$-Bessel Functions: Fractional Calculus, \\Integral Representations, and Complex Extensions}}
  \author{Masaya Kitajima}
  \date{}
  \maketitle
  \begin{abstract}
  We present a systematic analytic study of the \textit{$p$-Bessel functions} $\mathcal{J}_{\omega,\varphi}^{[p]}$, a novel class of generalized Bessel functions arising from Fourier analysis on planar domains bounded by $p$-circles, including astroid-type shapes with $0<p\le2$ satisfying $2/p\in\mathbb{N}$. While previous work established Hardy-type oscillatory identities for these domains, expressing lattice point discrepancies via $p$-Bessel functions, the present paper focuses on the intrinsic analytic properties of the functions themselves. In particular, we (i) construct a hierarchical structure of $\{\mathcal{J}_{\omega,\varphi}^{[p]}\}_{\omega\ge0}$ using Erd\'{e}lyi–Kober-type fractional derivatives, (ii) derive explicit real-analytic integral representations and obtain asymptotic formulas on the coordinate axes, and (iii) extend the functions to the complex domain through Poisson-type integral formulas. These results establish $p$-Bessel functions as genuinely new oscillatory kernels, providing a rigorous framework for studying anisotropic oscillatory phenomena and laying the analytic foundation for applications in $p$-circle lattice point problems.\\
\textbf{Keywords:} Fourier analysis, oscillatory identity, Bessel function, Lam\'{e}'s curves, Poisson summation, fractional derivatives and integrals.\\
\textbf{2020 Mathematics Subject Classification:} 42A38, 33C10, 42B05, 26A33, 11P82.
\\
  \end{abstract}
  \section{Introduction and main results}
\hspace{13pt}In this paper, we study a class of generalized Bessel functions arising from Poisson summation (\cite{Stein-1971}, p251, Theorem 2.4) over a planar domain bounded by the \textit{$p$-circle}. This closed curve, defined for $p>0$ by $\{x \in \mathbb{R}^2 \mid |x_1|^p + |x_2|^p = r^p\}$, is also known as a Lamé curve or superellipse (a generalization of the circle; see Figure 1). In particular, we focus on $p$-circles with $0<p\le 2$ satisfying $2/p \in \mathbb{N}$, which include both the circle and non-circular planar shapes such as the \textit{astroid}. Despite their simple algebraic definition, their associated identities and oscillatory structures have not been systematically studied. However, the condition $2/p\in\mathbb{N}$ ensures that these structures are suitable for the Fourier transform. Central to our investigation is a generalized Bessel function, which we call the \textit{$p$-Bessel functions of order $\omega\geq0$ and distorted angle $\varphi\in[0,2\pi)$}, defined by\vspace{-3pt}
\begin{equation}\label{p-Bessel-S}\vspace{-2pt}
\mathcal{J}_{\omega,\varphi}^{[p]}(r)
= \Bigl(\frac{2}{p}\Bigr)^{2+\omega} \frac{\pi}{\Gamma(\frac{1}{p})^2} 
\sum_{k=0}^{\infty} \frac{(-1)^k}{k! \, \Gamma(\frac{2}{p}(k+1) + \omega)}
\Bigl(\frac{r}{2}\Bigr)^{2k+\omega} \Phi_{k,\varphi}^{[p]}, 
\quad r \geq 0,
\end{equation}
\begin{equation}\label{Phi-co}
\text{with }\Phi_{k,\varphi}^{[p]} 
= \sum_{n=0}^{k} \frac{\bigl(\frac{2}{p}(k+1)-1\bigr)!}{n!\,(k-n)!} 
\left( \frac{B(\frac{2}{p}(n+\frac{1}{2}), \frac{2}{p}(k-n+\frac{1}{2}))}{B(n+\frac{1}{2}, k-n+\frac{1}{2})} \right) 
(\cos^{\frac{4}{p}}\varphi)^n (\sin^{\frac{4}{p}}\varphi)^{k-n}.
\end{equation}
The structure of this definition consists of an outer series in the radial variable $r$, 
whose coefficients $\Phi_{k,\varphi}^{[p]}$ encode angular anisotropy through a finite combinatorial sum. Note that this condition on $p$ guarantees uniform convergence of the series representation on compact subsets of $\mathbb{R}_{\geq 0}$. These $p$-Bessel functions extend the classical Bessel functions, recovering them in the case $p=2$ (that is, $\mathcal{J}_{\omega,\varphi}^{[2]} = J_{\omega}$). They capture the anisotropic oscillatory behavior that emerges when rotational symmetry is broken. \par\vspace{5pt}

  \begin{figure}[t]
    \centering
    \includegraphics[width=0.8\linewidth]{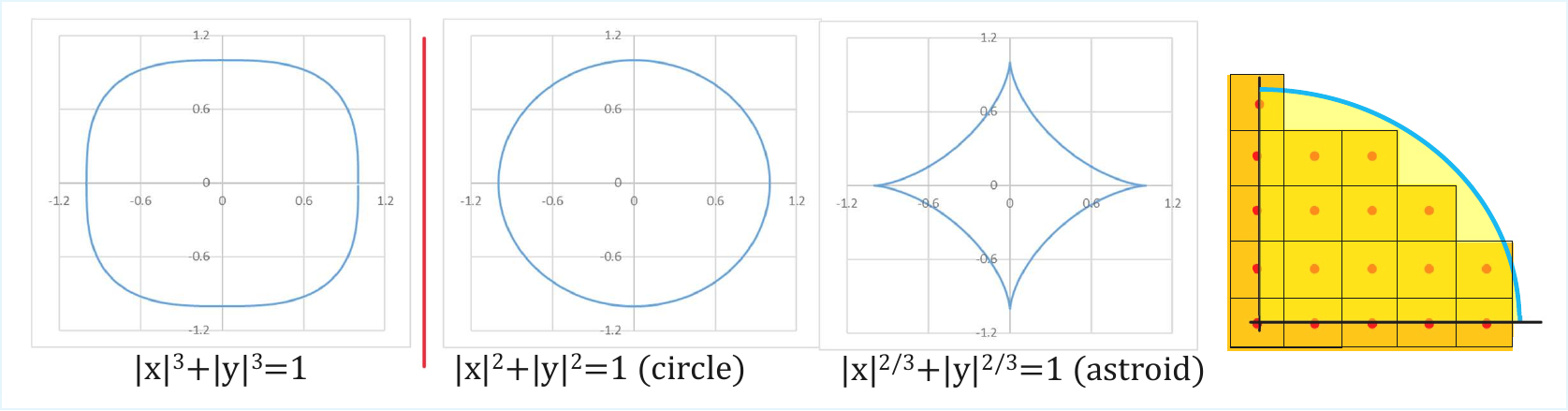}
    \label{fig}
    \caption{Examples of the $p$-circle and the approximation by unit squares.}
  \end{figure}
  Our motivation for this study is strongly rooted in the lattice point problem for $p$-circle domains.\par
  Recalling the lattice point problem for the $p$-circle, we focus on the discrepancy between the area of the region bounded by the $p$-circle and the number of lattice points it contains, denoted by $N_p(r)$. Approximating the closed curve as a mosaic (see the right side of Figure 1), we define
\begin{equation}
P_p(r) := N_p(r) - \frac{2}{p} \frac{\Gamma(\frac{1}{p})^2}{\Gamma(\frac{2}{p})} r^2.
\end{equation}
The classical $p$-circle lattice point problem asks for an exponent $\alpha_p$ such that $
P_p(r) = \mathcal{O}(r^{\alpha_p})$ and $P_p(r) = \Omega(r^{\alpha_p})$, where, for functions $f$ and $g$, $f(t) = \mathcal{O}(g(t))$ (resp. $f(t) = \Omega(g(t))$) means $\limsup_{t\to\infty} |f(t)/g(t)| < +\infty$ (resp. $> 0$). \par
For $p = 2$, the case of a circle, this is known as the Gauss circle problem\cite{Gauss}. In 1917, G.H. Hardy\cite{Hardy-1917}, building on his work\cite{Hardy-1915} and \textit{Hardy's identity} (for example, see also \cite{Kratzel}, Theorem 3.12)
\begin{equation}\label{HI}
  P_{2}(r)=r\sum_{k=1}^{\infty}\frac{R(k)}{k^{\frac{1}{2}}}J_{1}(2\pi k^{\frac{1}{2}}r)\quad\text{with $R(k):=\#\{n\in\mathbb{Z}^{2}|\ |n|^{2}=k\}$},
 \end{equation}
conjectured that the infimum exponent in the $\mathcal{O}$-estimate for $P_2(r)$ is $1/2$ (Hardy's conjecture). While numerous improvements have been made since then the problem remains unsolved (the most recent result by M.N. Huxley\cite{Huxley-2003} in 2003).\par
For $p > 2$, an effective approach exists based on a key theorem of E. Kr\"atzel (\cite{Kratzel}, Theorem 3.17 A), and the problem has been completely resolved for $p > 73/27$ by G. Kuba\cite{Kuba}.\par
By contrast, the range $0 < p < 2$ has received comparatively little attention, 
due in part to intrinsic analytic difficulties. In particular, applying Kr\"atzel's method to these cases leads to singularities in the main functions used, which significantly complicates the analysis. Nevertheless, different lattice point problems from ours in this range have been widely studied and remain an interesting area of research (see, for example, \cite{Laugesen1, Laugesen2}). \par\vspace{7pt}

Now, we turn our attention to Hardy's identity (\ref{HI}) and the Bessel functions, which play an important role in the lattice point problem of the circle domain. Hardy's identity shows that the oscillatory behavior of lattice point errors can be described using Bessel functions. This is a direct consequence of the preservation of spherical symmetry in the Fourier transform, which allows the oscillations to be captured by a single, direction-independent function. When the geometric object is generalized to the $p$-circle, however, this rotational symmetry is lost. As a result, the Fourier transform exhibits direction-dependent contributions, leading to a more intricate interference pattern in the lattice point errors.  \par
Building on the above, we employ the $p$-Bessel function (\ref{p-Bessel-S}) in previous work to derive a Hardy-type oscillatory identity for the \textit{astroid-type $p$-circles}, which generalizes (\ref{HI}) and reflects the underlying anisotropic geometry. This approach captures the oscillatory structures arising from the loss of rotational symmetry and provides an analytic framework for establishing Hardy-type identities across these regions.
\begin{theorem}[\textit{\cite{K3}, Theorem 1.2; Hardy-type oscillatory identity for the astroid-type $p$-circle}]\label{GHI}\ \\\itshape{Let $p$ satisfy $2/p\in\mathbb{N}$ and a finite set $\mathcal{A}_{s}^{[p]}$ consist of distorted angles $\varphi$ corresponding to lattice points on $p$-circle of radius $s^{1/p}\ (\geq1)$. Specifically, $\mathcal{A}_{s}^{[p]}$ is denoted as follows $(\#\mathcal{A}_{s}^{[p]}\leq4[s^{\frac{1}{p}}])$. 
    \begin{equation}\label{mathcalA}
    \mathcal{A}_{s}^{[p]}:=\{\varphi\in[0,2\pi)\ |\ (\mathrm{sgn}(\cos\varphi)s^{\frac{1}{p}}|\cos\varphi|^{\frac{2}{p}},\mathrm{sgn}(\sin\varphi)s^{\frac{1}{p}}|\sin\varphi|^{\frac{2}{p}})\in\mathbb{Z}^{2}\}.
    \end{equation}
    Then, the following holds for the counting measure $\mu$.}
      \begin{equation*}\label{HI-p}
    P_{p}(r)=\frac{p\Gamma(\frac{1}{p})^{2}}{2\pi}r\int_{1}^{\infty}\frac{1}{s^{\frac{1}{p}}}\Bigl(\sum\nolimits_{\varphi\in\mathcal{A}_{s}^{[p]}}\mathcal{J}_{1,\varphi}^{[p]}(2\pi s^{\frac{1}{p}}r)\Bigr)d\mu(s). 
    \end{equation*}
\end{theorem}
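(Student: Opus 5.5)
Our plan is to obtain the identity from the Poisson summation formula (\cite{Stein-1971}) applied to the indicator function $\chi_r$ of the $p$-disc $D_p(r)=\{x\in\mathbb{R}^2: |x_1|^p+|x_2|^p\le r^p\}$, with the boundary lattice points suitably weighted. This mirrors the classical derivation of Hardy's identity (\ref{HI}). Formally,
\begin{equation*}
N_p(r)=\sum_{m\in\mathbb{Z}^2}\widehat{\chi_r}(m),\qquad \widehat{\chi_r}(0)=\frac{2}{p}\frac{\Gamma(\frac{1}{p})^2}{\Gamma(\frac{2}{p})}r^2,
\end{equation*}
so $P_p(r)=\sum_{m\neq0}\widehat{\chi_r}(m)$.

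The first step is to compute $\widehat{\chi_r}(\xi)$ explicitly. We write $\xi$ in the distorted polar coordinates suggested by (\ref{mathcalA}), namely $\xi=(\mathrm{sgn}(\cos\varphi)s^{1/p}|\cos\varphi|^{2/p},\ \mathrm{sgn}(\sin\varphi)s^{1/p}|\sin\varphi|^{2/p})$, so that $|\xi_1|^p+|\xi_2|^p=s$.

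We then expand $e^{-2\pi i x\cdot\xi}$ in a power series. Only even powers in each coordinate survive, by symmetry. Each monomial $x_1^{2n}x_2^{2(k-n)}$ is integrated over $D_p(r)$ via the substitution $x_j=r|u_j|^{2/p}$-type (Dirichlet integral). This produces Beta functions $B(\frac{2}{p}(n+\frac12),\frac{2}{p}(k-n+\frac12))$ times $\Gamma$-factors, together with $\xi_1^{2n}\xi_2^{2(k-n)}=s^{2k/p}(\cos^{4/p}\varphi)^n(\sin^{4/p}\varphi)^{k-n}$. Rewriting $(2n)!(2k-2n)!$ through $B(n+\frac12,k-n+\frac12)$ via the duplication formula should reproduce exactly the coefficients $\Phi^{[p]}_{k,\varphi}$ of (\ref{Phi-co}). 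Here the hypothesis $2/p\in\mathbb{N}$ is what makes $(\frac{2}{p}(k+1)-1)!$ a genuine factorial. The target identity is
\begin{equation*}
\widehat{\chi_r}(\xi)=\frac{p\Gamma(\frac1p)^2}{2\pi}\,\frac{r}{s^{1/p}}\,\mathcal{J}^{[p]}_{1,\varphi}(2\pi s^{1/p}r),
\end{equation*}
obtained by matching powers of $r$ and constants with (\ref{p-Bessel-S}) at $\omega=1$. Each bookkeeping step here is routine but must be checked carefully.

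With this in hand, the second step groups the nonzero lattice points $m$ according to $s=|m_1|^p+|m_2|^p$. Since $s\ge1$ for $m\ne0$, the sum over $m\neq0$ becomes the sum over $s$ of $\sum_{\varphi\in\mathcal{A}^{[p]}_s}$, which is precisely the integral against the counting measure $\mu$ in the statement. The bound $\#\mathcal{A}_s^{[p]}\le4[s^{1/p}]$ follows because $m_1$ determines $m_2$ up to sign.

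The main obstacle is justifying Poisson summation for the discontinuous $\chi_r$ and establishing convergence of the resulting conditionally convergent series. Our first attempt will follow Hardy's route. We smooth $\chi_r$ by convolving with an approximate identity $\rho_\varepsilon$ (or use the integrated count $\int_0^r N_p$, which gives $\mathcal{J}_2$-terms with absolute convergence). Poisson summation is then valid, and we let $\varepsilon\to0$. This requires a decay estimate $\mathcal{J}^{[p]}_{1,\varphi}(t)=O(t^{-1/2})$ with some uniformity in $\varphi$, or at least an averaged version over $\mathcal{A}_s$. Any such estimate must also handle the axis directions, where the $p$-circle has vanishing curvature (for $p<2$, cusps) and decay is weaker. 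We would then differentiate back in $r$, using the relation $\frac{d}{dr}(r^2\mathcal{J}_2)\sim r\mathcal{J}_1$, which follows termwise from (\ref{p-Bessel-S}). Finally, equality holds at continuity points of $N_p$ and, with the usual half-weight convention, at jumps.
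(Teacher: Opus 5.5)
Your formal computation is correct and follows the route the paper points to. The paper does not reprove Theorem \ref{GHI}; it quotes it from \cite{K3} and attributes $\mathcal{J}^{[p]}_{\omega,\varphi}$ to Poisson summation over the $p$-disc. The Dirichlet integral gives
\[
\widehat{\chi_r}(\xi)=\frac{4r^{2}}{p^{2}}\sum_{m_1,m_2\ge0}\frac{(-1)^{m_1+m_2}\,\Gamma(\frac{2m_1+1}{p})\,\Gamma(\frac{2m_2+1}{p})}{(2m_1)!\,(2m_2)!\,\Gamma(\frac{2}{p}(m_1+m_2+1)+1)}\,(2\pi r\xi_1)^{2m_1}(2\pi r\xi_2)^{2m_2}.
\]
By (\ref{2-J}) with $\omega=1$ this is exactly $\frac{p\Gamma(1/p)^{2}}{2\pi}\frac{r}{s^{1/p}}\mathcal{J}^{[p]}_{1,\varphi}(2\pi s^{1/p}r)$, and the duplication formula yields (\ref{Phi-0}).

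The gap is that this gives only the formal identity $P_p(r)=\sum_{m\neq0}\widehat{\chi_r}(m)$. The whole analytic content of the theorem is that the lattice sum, taken over increasing $s$, converges to $P_p(r)$. The sum is not absolutely convergent (already for $p=2$ the terms have size $|m|^{-3/2}$), and you only sketch this step; the sketch does not close.
\begin{enumerate}
\item Differentiating a smoothed or integrated identity back in $r$ termwise presupposes convergence of precisely the conditionally convergent series you want to establish.
\item A Landau-type differencing argument using monotonicity of $N_p$ gives only $\mathcal{O}$-bounds unless it is supplemented by cancellation estimates for the oscillatory lattice sums, and you give none.
\item The relation you invoke is misstated. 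Lemma \ref{DF} with $\omega=1$ gives $\frac{d}{dr}\bigl(r^{p}\mathcal{J}^{[p]}_{2,\varphi}(r)\bigr)=r^{p}\mathcal{J}^{[p]}_{1,\varphi}(r)$; already for $p=2$ it is $r^{2}J_{1}$, not $rJ_{1}$. So the Riesz mean that produces $\mathcal{J}_{2}$-terms is $\int_{0}^{r}N_p(t)t^{p-1}\,dt=\frac{1}{p}\sum_{|m|_p\le r}(r^{p}-|m|_p^{p})$, not $\int_0^rN_p$.
\end{enumerate}

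Moreover, the analytic inputs you name are false or not available.

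A bound $\mathcal{J}^{[p]}_{1,\varphi}(t)=\mathcal{O}(t^{-1/2})$ uniform in $\varphi$ fails. For $p=1$ (allowed, since $2/p=2$), the series (\ref{2-J}) sums to $\mathcal{J}^{[1]}_{1,\pi/4}(t)=4\sin(t/2)$, which does not decay at all. Correspondingly the terms at $m=(n,n)$ are $\frac{r\sin(2\pi nr)}{\pi n}$, reflecting the flat sides of the square. For $2/p\ge5$, Theorem \ref{AFCA} with $\omega=1$ gives the non-oscillatory axis term $\frac{4\Gamma(p+1)}{p^{2}\Gamma(1/p)^{2}}\sin\frac{\pi p}{2}\,t^{-p}\gg t^{-1/2}$. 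So convergence cannot come from pointwise decay plus counting; it needs cancellation among lattice terms.

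Likewise, the claimed absolute convergence of the $\mathcal{J}_{2}$-series is not automatic. By Theorem \ref{AFCA} with $\omega=2$, $\mathcal{J}^{[p]}_{2,\varphi}$ grows like $t^{1-p}$ on the axes for $p<1$. Summability therefore hinges on an estimate near the axes that is uniform in $\varphi$, and the paper itself says this still requires a more delicate analysis. The only uniform information available, Proposition \ref{uni-zero-ord} combined with Lemma \ref{IF}, gives $|\mathcal{J}^{[p]}_{2,\varphi}(t)|\lesssim t^{2-p/2}$. That means Fourier coefficients $\mathcal{O}(|m|_p^{-p/2})$, which are not summable over $\mathbb{Z}^{2}$.

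What is missing is an actual convergence argument for the lattice sum at non-boundary $r$. One possibility is a truncated Hardy--Voronoi-type formula whose error tends to $0$ as the truncation $T\to\infty$, built on near-axis uniform bounds and on cancellation in the sums over $\mathcal{A}_s^{[p]}$. That is exactly the part your proposal leaves open.
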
\vspace{10pt}

Before presenting further analytic results, we emphasize that the $p$-Bessel functions are the central objects of this paper. While Theorem \ref{GHI} provides a Hardy-type oscillatory identity for astroid-type $p$-circles, it serves primarily as an application illustrating their relevance to lattice point problems. The main objective of this work is a systematic analytic study of the functions $\mathcal{J}_{\omega,\varphi}^{[p]}$ themselves. They encode the direction-dependent oscillations inherent to the $p$-circle and provide the analytic framework for understanding the oscillatory behavior in the Hardy-type identity. \par

This examination is crucial because the defining series of $\mathcal{J}_{\omega,\varphi}^{[p]}$, involving an outer infinite sum and an inner finite sum with coefficients $\Phi_{k,\varphi}^{[p]}$ that couple geometric and combinatorial data, reflects the intrinsic anisotropy of astroid-type $p$-circle domains. Understanding these structural properties lays the foundation for both Hardy-type identities and a broader analytic treatment of $p$-circle lattice point problems. \par\vspace{10pt}

A central feature of classical radial Fourier analysis is that oscillatory kernels are described by well-established special functions, such as Bessel functions. Their series expansions are governed by coefficients depending on a single summation index, typically expressed through gamma functions with linear arguments. This property situates them firmly within the framework of hypergeometric-type or Wright-type functions (see, e.g., \cite{Andrews-Askey-Roy,Wright,Mathai-Saxena-Haubold}) and their multivariate extensions \cite{Srivastava-Karlsson}. In contrast, the $p$-Bessel functions do not, in general, admit such a representation, as their defining series involves a coupled two-layered summation structure that cannot be reduced to a single-index form.\par

Replacing rotational symmetry with $p$-radial symmetry fundamentally alters this picture. The oscillatory kernels arising from the Fourier transform of $p$-circle domains involve a two-layered series in which geometric parameters interact with the combinatorial coefficients $\Phi_{k,\varphi}^{[p]}$. As a result, the outer coefficients cannot, in general, be expressed solely in terms of a single summation index, placing these functions outside the scope of standard Wright-type or hypergeometric-type classifications. \par

This deviation reflects the inherent anisotropy of $p$-circle domains. In particular, the distorted angle parameter $\varphi$ induces a non-separable coupling between radial and angular components—a feature absent in classical radial settings. Therefore, the functions $\mathcal{J}_{\omega,\varphi}^{[p]}$ constitute genuinely novel oscillatory kernels associated with non-Euclidean symmetries, rather than mere extensions of existing special functions.

Collectively, these observations establish the generalized Bessel functions $\mathcal{J}_{\omega,\varphi}^{[p]}$ as a novel class of special functions naturally arising from Fourier analysis on anisotropic domains. They serve in Hardy-type identities for $p$-circle lattice point problems analogously to classical Bessel functions, while displaying a fundamentally different analytic structure. Thus, $\mathcal{J}_{\omega,\varphi}^{[p]}$ provide an anisotropic analogue of the classical Bessel framework. \par\vspace{10pt}

We next discuss their analytic properties and summarize the main contributions of this work.\par
First, we establish a hierarchical structure of the family $\{\mathcal{J}_{\omega,\varphi}^{[p]}\}_{\omega\ge0}$ through \textit{Erd\'{e}lyi–Kober-type fractional differential operators} $D_{0+;p,\eta}^{\gamma}$.
 \begin{theorem}[\textit{Erd\'{e}lyi–Kober-type fractional differential identity}]\label{E-K-derJ}\ \\
  \itshape{Let $p$ satisfy $2/p\in\mathbb{N}$. Then, for $\omega\geq0$ and order of derivative $0<\gamma<1$, the following holds.}
  \begin{equation*}
    (D_{0+;p,(1-\frac{1}{p})\omega+\frac{2-\gamma}{p}-1}^{\gamma})\mathcal{J}_{\omega+\gamma,\varphi}^{[p]}(r)=\Bigl(\frac{r}{p}\Bigr)^{\gamma}\mathcal{J}_{\omega,\varphi}^{[p]}(r)\qquad\text{for }r>0.
  \end{equation*}
  \end{theorem}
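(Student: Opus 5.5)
The plan is to verify the identity term by term on the power series (\ref{p-Bessel-S}). The one structural fact that makes this possible is that the angular coefficients $\Phi_{k,\varphi}^{[p]}$ in (\ref{Phi-co}) do not depend on $\omega$. The operators $D_{0+;p,\eta}^{\gamma}$ are not defined in the excerpt above, so the whole plan rests on an assumption about their normalization. I will take them to be the Erd\'{e}lyi–Kober operators with power weight $x^{\sigma}$, $\sigma=p$:
\[
I_{0+;p,\eta}^{\alpha}f(x)=\frac{p\,x^{-p(\alpha+\eta)}}{\Gamma(\alpha)}\int_0^x t^{p\eta+p-1}(x^p-t^p)^{\alpha-1}f(t)\,dt,
\]
\[
D_{0+;p,\eta}^{\gamma}f=x^{-p\eta}\Bigl(\frac{1}{p\,x^{p-1}}\frac{d}{dx}\Bigr)x^{p(1+\eta)}\,I_{0+;p,\eta+\gamma}^{1-\gamma}f\qquad(0<\gamma<1).
\]
If the paper uses a different convention, the exponent bookkeeping below changes accordingly.

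\textbf{Step 1 (action on powers).} For $\beta>-p(\eta+1)$, the substitution $u=(t/x)^p$ reduces the integral to a Beta function, giving
\[
I_{0+;p,\eta}^{\alpha}x^{\beta}=\frac{\Gamma(\eta+1+\beta/p)}{\Gamma(\eta+1+\alpha+\beta/p)}\,x^{\beta}.
\]
Composing this with the first-order operator $x^{-p\eta}(p x^{p-1})^{-1}\frac{d}{dx}x^{p(1+\eta)}$, which multiplies $x^\beta$ by $(\eta+1+\beta/p)$, yields
\[
D_{0+;p,\eta}^{\gamma}x^{\beta}=\frac{\Gamma(\eta+1+\gamma+\beta/p)}{\Gamma(\eta+1+\beta/p)}\,x^{\beta}.
\]

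\textbf{Step 2 (matching exponents).} Take $\eta=(1-\frac1p)\omega+\frac{2-\gamma}{p}-1$ and $\beta=2k+\omega+\gamma$. Then
\[
\eta+1+\frac{\beta}{p}=\frac{2}{p}(k+1)+\omega .
\]
Hence $D^{\gamma}$ sends $r^{2k+\omega+\gamma}$ to $\frac{\Gamma(\frac2p(k+1)+\omega+\gamma)}{\Gamma(\frac2p(k+1)+\omega)}r^{2k+\omega+\gamma}$. This numerator cancels exactly the gamma factor in the $k$-th coefficient of $\mathcal{J}_{\omega+\gamma,\varphi}^{[p]}$ and replaces it by $\Gamma(\frac2p(k+1)+\omega)$.

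The remaining constants also match. On the left we have $(2/p)^{2+\omega+\gamma}2^{-(2k+\omega+\gamma)}r^{2k+\omega+\gamma}$. On the right we have $p^{-\gamma}r^{\gamma}\cdot(2/p)^{2+\omega}2^{-(2k+\omega)}r^{2k+\omega}$. These agree because $(2/p)^{\gamma}2^{-\gamma}=p^{-\gamma}$. Since $\Phi_{k,\varphi}^{[p]}$ and $(-1)^k/k!$ are common to both sides, the identity holds termwise. The admissibility condition $\beta>-p(\eta+1)$ reduces to $\frac2p(k+1)+\omega>0$, which is automatic.

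\textbf{Step 3 (interchange; the main obstacle).} I must justify applying $I^{1-\gamma}$ and then the derivative term by term. The integral operator is the easy part. The series (\ref{p-Bessel-S}) converges uniformly on $[0,R]$, so after factoring out $r^{\omega+\gamma}$ it is bounded there, and dominated convergence applies inside the Beta-type integral.

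For the derivative, I will show that the series of differentiated terms converges locally uniformly on $(0,\infty)$. This needs a growth bound on the coefficients. I would bound $|\Phi_{k,\varphi}^{[p]}|\le C^{k}\Gamma(\frac2p(k+1))$ using the trivial estimates on the Beta ratios and the binomial sum. Against this, $1/(k!\,\Gamma(\frac2p(k+1)+\omega))$ still decays faster than geometrically, so the differentiated series is entire-type in $r^{p}$ away from $0$. This gives termwise differentiability and completes the proof for $r>0$.
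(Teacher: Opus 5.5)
Your proof is correct. The normalization you assumed is exactly the paper's (\ref{E-K-I})--(\ref{E-K-D}) after substituting $t=\tau x$, so none of your exponent bookkeeping changes. Your route does differ from the paper's, however.

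The paper never expands in series. It notes that with $\eta=(1-\frac1p)\omega+\frac{2-\gamma}{p}-1$ one has $\eta+\gamma=(1-\frac1p)(\omega+\gamma)+\frac2p-1$. Hence the inner operator $I^{1-\gamma}_{0+;p,\eta+\gamma}$ applied to $\mathcal{J}^{[p]}_{\omega+\gamma,\varphi}$ is precisely the order-raising integral of Lemma \ref{IF} (order $\omega+\gamma$, increment $1-\gamma$), which gives $p^{1-\gamma}r^{\gamma-1}\mathcal{J}^{[p]}_{\omega+1,\varphi}(r)$. The outer weighted derivative then becomes $p^{-\gamma}r^{\gamma-1-(p-1)\omega}\frac{d}{dr}\bigl(r^{1+(p-1)\omega}\mathcal{J}^{[p]}_{\omega+1,\varphi}(r)\bigr)$, and Lemma \ref{DF} finishes. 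So the fractional derivative is realized as ``raise the order by $1-\gamma$, then lower it by $1$''. All convergence questions are absorbed into the two previously established lemmas. This is shorter, and it applies verbatim to any family obeying those two order-shifting relations.

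Your monomial computation is self-contained. It also explains why this particular $\eta$ occurs: it is the unique choice, independent of $k$, with $\eta+1+\beta/p=\frac2p(k+1)+\omega$ for $\beta=2k+\omega+\gamma$. And it isolates the $\omega$-independence of $\Phi^{[p]}_{k,\varphi}$ as the real mechanism. The price is Step 3, which you handle adequately.

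Your coefficient bound is easier than you suggest. Since $2/p\ge1$ and $B$ is nonincreasing in each argument, the Beta ratio in (\ref{Phi-co}) is at most $1$, so $|\Phi^{[p]}_{k,\varphi}|\le 2^{k}\Gamma(\frac2p(k+1))/k!$. Factoring out $(\tau r)^{\omega+\gamma}$ before dominating is genuinely necessary: for $p<1$ and large $\omega$ the weight $\tau^{(p-1)(\omega+\gamma)+1}$ alone is not integrable at $0$.

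One minor wording slip: after removing the power of $r$, the remaining series is entire in $r^{2}$, not in $r^{p}$.
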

This identity provides a real-analytic hierarchical framework, linking generalized Bessel functions of different orders and highlighting the natural role of fractional calculus in the $p$-circle setting.\par\vspace{5pt}
Next, we derive explicit real-analytic integral representations valid for $r \ge 0$, which are suitable for investigating axis-dependent asymptotic behavior:
\begin{theorem}[\textit{Integral representation}]\label{matcal-J-int0}
\itshape{Let $p$ satisfy $2/p\in\mathbb{N}$, then the following holds.}
\begin{align*}
     \mathcal{J}_{\omega,\varphi}^{[p]}(r)=
        \frac{(\frac{2}{p})^{2}r^{\omega}}{p^{\omega-1}\Gamma(\omega)\Gamma(\frac{1}{p})^{2}}\int_{0}^{1}\Bigl(\int_{0}^{1}\cos(r&(s(1-u^{p})\cos^{\frac{2}{p}}\varphi)^{\frac{1}{p}})s^{\frac{1}{p}-1}(1-s)^{\omega-1}ds\Bigr)\\
        \times&\cos(ru\sin^{\frac{2}{p}}\varphi)(1-u^{p})^{\frac{1}{p}+\omega-1}du\quad\text{for }\omega>0,
\end{align*}
\begin{equation*}
        \mathcal{J}_{0,\varphi}^{[p]}(r)=\frac{4}{p\Gamma(\frac{1}{p})^{2}}\int_{0}^{1}\cos(r(1-u^{p})^{\frac{1}{p}}\cos^{\frac{2}{p}}\varphi)\cos(ru\sin^{\frac{2}{p}}\varphi)(1-u^{p})^{\frac{1}{p}-1}du.
  \end{equation*}
\end{theorem}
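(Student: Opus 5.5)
The plan is to verify the representation termwise. I expand both cosines in the integrand into their Maclaurin series, integrate term by term using Beta integrals, and match the resulting double series with the definition (\ref{p-Bessel-S})--(\ref{Phi-co}). I read the first cosine's argument as $r(s(1-u^{p}))^{\frac1p}\cos^{\frac2p}\varphi$, so that its $2m$-th power produces $(\cos^{\frac4p}\varphi)^m$, as required by $\Phi_{k,\varphi}^{[p]}$. The interchange of summation and integration is harmless. For $\omega>0$ the weights $s^{\frac1p-1}(1-s)^{\omega-1}$ and $(1-u^p)^{\frac1p+\omega-1}$ are integrable on $[0,1]$. The cosine series are dominated by $\cosh(r)\cosh(r)$ uniformly on compact $r$-sets, since $|X|,|Y|\le 1$. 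Fubini/Tonelli therefore applies.

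\textbf{Step 1 (expansion).} With $X=(s(1-u^p))^{\frac1p}\cos^{\frac2p}\varphi$ and $Y=u\sin^{\frac2p}\varphi$, I write
\[
\cos(rX)\cos(rY)=\sum_{m,n\ge0}\frac{(-1)^{m+n}r^{2m+2n}}{(2m)!\,(2n)!}\,s^{\frac{2m}{p}}(1-u^p)^{\frac{2m}{p}}u^{2n}(\cos^{\frac4p}\varphi)^m(\sin^{\frac4p}\varphi)^n .
\]
I then regroup by $k=m+n$.

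\textbf{Step 2 (Beta integrals).} The $s$-integral equals
\[
\int_0^1 s^{\frac{2m+1}{p}-1}(1-s)^{\omega-1}ds=\frac{\Gamma(\frac{2m+1}{p})\Gamma(\omega)}{\Gamma(\frac{2m+1}{p}+\omega)}.
\]
For the $u$-integral I substitute $t=u^p$:
\[
\int_0^1u^{2n}(1-u^p)^{\frac{2m+1}{p}+\omega-1}du=\frac1p\,\frac{\Gamma(\frac{2n+1}{p})\Gamma(\frac{2m+1}{p}+\omega)}{\Gamma(\frac{2k+2}{p}+\omega)}.
\]
The factors $\Gamma(\omega)$ and $\Gamma(\frac{2m+1}{p}+\omega)$ cancel. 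This leaves $\Gamma(\frac{2m+1}{p})\Gamma(\frac{2n+1}{p})/(p\,\Gamma(\frac2p(k+1)+\omega))$, which already exhibits the denominator $\Gamma(\frac2p(k+1)+\omega)$ of (\ref{p-Bessel-S}).

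\textbf{Step 3 (matching coefficients).} I use the duplication formula in the form $(2m)!=2^{2m}m!\,\Gamma(m+\frac12)/\sqrt\pi$, and similarly for $(2n)!$. I also use $B(m+\frac12,n+\frac12)=\Gamma(m+\frac12)\Gamma(n+\frac12)/k!$ and $B(\frac2p(m+\frac12),\frac2p(n+\frac12))=\Gamma(\frac{2m+1}{p})\Gamma(\frac{2n+1}{p})/\Gamma(\frac2p(k+1))$. With these, the $k$-th coefficient becomes
\[
\frac{\pi\,(\frac2p(k+1)-1)!}{p\,4^k\,k!\,\Gamma(\frac2p(k+1)+\omega)}\sum_{m+n=k}\frac{1}{m!\,n!}\frac{B(\frac2p(m+\frac12),\frac2p(n+\frac12))}{B(m+\frac12,n+\frac12)}(\cos^{\frac4p}\varphi)^m(\sin^{\frac4p}\varphi)^n .
\]
Here $2/p\in\mathbb N$ makes $\Gamma(\frac2p(k+1))=(\frac2p(k+1)-1)!$. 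The inner sum is exactly $\Phi^{[p]}_{k,\varphi}$ from (\ref{Phi-co}), and $r^{2k}/4^k=(r/2)^{2k}$. Multiplying by the prefactor $\frac{(2/p)^2 r^\omega}{p^{\omega-1}\Gamma(\omega)\Gamma(1/p)^2}\cdot\Gamma(\omega)$, I then check that the constants and powers of $2$ and $p$ reproduce $(\frac2p)^{2+\omega}\frac{\pi}{\Gamma(1/p)^2}(\frac r2)^{\omega}$. This bookkeeping of constants, in particular the factor $p^{-\omega}$ against $2^{-\omega}$, is where I expect the main risk of a mismatch.

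\textbf{Step 4 (the case $\omega=0$).} I would redo Steps 1--3 directly, with no $s$-integral. The $u$-integral is now $\frac1p\Gamma(\frac{2n+1}{p})\Gamma(\frac{2m+1}{p})/\Gamma(\frac{2k+2}{p})$, so the same coefficient matching gives the stated formula with constant $\frac{4}{p\Gamma(1/p)^2}$. Alternatively, I could let $\omega\to0^+$ in the $\omega>0$ formula, using that $(1-s)^{\omega-1}/\Gamma(\omega)\to\delta_{s=1}$ weakly. Both sides are continuous in $\omega$ by uniform convergence of (\ref{p-Bessel-S}) on compacts. The direct computation is cleaner and is the one I would present.
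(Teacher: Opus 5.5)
Your proposal is correct and is essentially the paper's computation run in the opposite direction. The paper starts from the double series (\ref{2-J}), writes the Gamma ratios as Beta integrals in $t$ and $s$, resums into cosines and substitutes $u=(1-t)^{1/p}$, while you expand the cosines and evaluate the same Beta integrals term by term with the same Fubini justification; your Step~3 additionally checks, via the duplication formula, that the double series coincides with the $\Phi_{k,\varphi}^{[p]}$-form (\ref{p-Bessel-S}), which the paper takes from its earlier work, and your reading $r(s(1-u^{p}))^{1/p}\cos^{2/p}\varphi$ of the first cosine's argument agrees with the paper's (\ref{J-int}).
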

These integral formulas, defined as double- and single-variable integrals, are particularly suitable for real-analytic asymptotic studies. \par\vspace{5pt}
\begin{theorem}[\textit{Asymptotic formula on the coordinate axes}]\label{AFCA}
\itshape{Let $p$ satisfy $\frac{2}{p}\in\mathbb{N}\setminus\{2\}$ and $\omega\geq0$. For
$\varphi_{\mathrm{axis}}\in\{0,\frac{\pi}{2},\pi,\frac{3\pi}{2}\}$, let $x_{\mathrm{axis}}$ denote the corresponding point on the coordinate axes with $|x_{\mathrm{axis}}|=r$. Then, as $r\to\infty$,}
\begin{equation*}
  J_{\omega}^{[p]}(x_{\mathrm{axis}})=\frac{4\Gamma(p+1)}{p^{\omega+1}\Gamma(\omega+\frac{1}{p}-1)\Gamma(\frac{1}{p})}r^{\omega-p-1}\sin\frac{\pi p}{2}+\frac{4p^{\frac{1}{p}-2}}{\Gamma(\frac{1}{p})}r^{-\frac{1}{p}}\cos\Bigl(r-\frac{\pi}{2}(\frac{1}{p}+\omega)\Bigr)+\mathcal{O}(r^{-\beta(p,\omega)}),
\end{equation*}
where $\beta(p,\omega):=\min\{1+2p-\omega,1+1/p\}$. In particular, this result indicates that an asymptotic estimate uniform with respect to the distorted angle $\varphi$ should depend on the order $\omega$.
\end{theorem}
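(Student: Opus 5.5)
We prove the statement by reducing the double integral of Theorem \ref{matcal-J-int0} on a coordinate axis to a single Fourier cosine integral with an explicit algebraic weight, and then apply the classical endpoint asymptotics (Erd\'elyi's lemma). By the symmetries $\varphi\mapsto\pi\pm\varphi$ and $\varphi\mapsto\frac{\pi}{2}-\varphi$, visible in (\ref{Phi-co}) since only $\cos^{4/p}\varphi$ and $\sin^{4/p}\varphi$ enter, it suffices to treat $\varphi_{\mathrm{axis}}=0$. There $\cos^{\frac{2}{p}}\varphi=1$ and $\sin^{\frac{2}{p}}\varphi=0$, so the factor $\cos(ru\sin^{2/p}\varphi)$ equals $1$.

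\textbf{Reduction for $\omega>0$.} In the inner integral we substitute $t=(s(1-u^p))^{1/p}$ and write $v=1-u^p$. Then
\[
s^{\frac1p-1}(1-s)^{\omega-1}\,ds=p\,v^{1-\omega-\frac1p}(v-t^p)^{\omega-1}\,dt .
\]
The powers of $v$ cancel against $(1-u^p)^{\frac1p+\omega-1}$. Exchanging the order of integration by Fubini (all integrands are nonnegative apart from the cosine) gives
\[
\mathcal{J}_{\omega,0}^{[p]}(r)=C_{p,\omega}\,r^{\omega}\int_0^1\cos(rt)\,w(t)\,dt,\qquad w(t)=p\int_0^{(1-t^p)^{1/p}}(1-t^p-u^p)^{\omega-1}\,du .
\]
Here $C_{p,\omega}=\frac{(2/p)^2}{p^{\omega-1}\Gamma(\omega)\Gamma(1/p)^2}$. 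Scaling $u=(1-t^p)^{1/p}y$ evaluates the weight in closed form:
\[
w(t)=B\bigl(\tfrac1p,\omega\bigr)(1-t^p)^{\lambda},\qquad \lambda:=\omega+\tfrac1p-1 .
\]
For $\omega=0$, the second formula of Theorem \ref{matcal-J-int0} gives the same structure directly after substituting $t=(1-u^p)^{1/p}$. The weight is then $(1-t^p)^{\frac1p-1}$, which is the case $\lambda=\frac1p-1$, with a different constant.

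\textbf{Asymptotics of $I(r)=\int_0^1\cos(rt)(1-t^p)^\lambda\,dt$.} We use a smooth partition of unity $1=\chi_0+\chi_1$ on $[0,1]$, with $\chi_0$ supported near $0$ and $\chi_1$ supported near $1$.
\begin{enumerate}[(i)]
\item Near $t=0$ we expand $(1-t^p)^\lambda=1-\lambda t^p+\mathcal{O}(t^{2p})$, with derivative control. The constant term times the cutoff gives an $\mathcal{O}(r^{-N})$ contribution after repeated integration by parts, because $\cos(rt)$ is even and the weight is flat at $0$.
\item The term $-\lambda t^p$ gives
\[
-\lambda\,\Gamma(p+1)\cos\bigl(\tfrac{\pi(p+1)}2\bigr)r^{-p-1}=\lambda\,\Gamma(p+1)\sin\bigl(\tfrac{\pi p}{2}\bigr)r^{-p-1}.
\]
The remainder $\mathcal{O}(t^{2p})$ contributes $\mathcal{O}(r^{-2p-1})$.
\item Near $t=1$ we write $(1-t^p)^\lambda=p^\lambda(1-t)^\lambda(1+\mathcal{O}(1-t))$. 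Erd\'elyi's lemma then gives
\[
p^\lambda\Gamma(\lambda+1)\,r^{-\lambda-1}\cos\bigl(r-\tfrac{\pi}{2}(\lambda+1)\bigr)+\mathcal{O}(r^{-\lambda-2}),\qquad \lambda+1=\omega+\tfrac1p .
\]
\end{enumerate}

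\textbf{Assembly.} We multiply by $C_{p,\omega}B(\frac1p,\omega)r^\omega$. The main terms become $r^{\omega-p-1}\sin\frac{\pi p}{2}$ and $r^{-1/p}\cos(r-\frac{\pi}{2}(\frac1p+\omega))$, and the errors become $\mathcal{O}(r^{\omega-2p-1})+\mathcal{O}(r^{-1/p-1})$. This matches the claimed statement with $\beta(p,\omega)=\min\{1+2p-\omega,1+1/p\}$. For the constants, we simplify using $\lambda/\Gamma(\omega)\cdot B(\frac1p,\omega)=\Gamma(\frac1p)/\Gamma(\omega+\frac1p-1)$ and $\Gamma(\lambda+1)B(\frac1p,\omega)/\Gamma(\omega)=\Gamma(\frac1p)$. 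We must also relate the normalization of $J_\omega^{[p]}(x_{\mathrm{axis}})$ to $\mathcal{J}_{\omega,\varphi_{\mathrm{axis}}}^{[p]}(r)$; we take this from its defining relation in the body of the paper, presumably a factor in powers of $p$.

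\textbf{Main obstacle.} I expect the hardest step to be the rigorous endpoint analysis at $t=0$ when $0<p<1$. The weight is then only H\"older in $t$, the expansion lives in powers of $t^p$, and we need uniform remainder bounds with derivatives in order to apply Erd\'elyi's lemma for $t^{\mu}$ with non-integer $\mu$. A second difficulty is the regime $\lambda\le 0$ (small $\omega$), where the weight is singular at $t=1$. I would handle both uniformly via the Mellin-type formula $\int_0^\infty t^{\mu}e^{irt}\chi(t)\,dt$ with $\chi$ a smooth cutoff. The hypothesis $2/p\neq 2$ excludes $p=1$; there the $t^p$ term is smooth, $\sin\frac{\pi p}{2}=1$, and the pattern of the expansion changes, so that case is left aside as in the statement.
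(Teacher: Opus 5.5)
Your proposal is correct and follows essentially the paper's proof. Both reduce to $C\,r^{\omega}\int_0^1\cos(ru)(1-u^p)^{\omega+\frac1p-1}\,du$, split with a smooth partition of unity, and apply the oscillatory Watson/Erd\'elyi lemma at $u=0$ (binomial expansion in powers of $u^p$) and at $u=1$ (via $(1-u^p)^{\alpha}=p^{\alpha}(1-u)^{\alpha}(1+\mathcal{O}(1-u))$). The paper takes the axis point $(0,r)$, where $x_1=0$ collapses the inner $s$-integral to $B(\frac1p,\omega)$ at once, so your Fubini/substitution step for $\varphi=0$ is a valid but unnecessary detour. The normalization you hedge on is exact, $J_{\omega}^{[p]}(x)=\mathcal{J}_{\omega,\varphi}^{[p]}(r)$ with no extra power of $p$, as your own constants already confirm.
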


Finally, we extend the series definition to the complex domain and derive an alternative integral representation in terms of the entire function
\begin{equation*}
\cos_{\varphi}^{[p]}(z) := \sum_{k=0}^{\infty} \frac{(-1)^{k}}{(2k)!!} \left( \frac{\sqrt{\pi} \, \Phi_{k,\varphi}^{[p]}}{\Gamma(\frac{1}{p}(2k+1)) 2^{k}} \right) z^{2k}, \quad z\in\mathbb{C}.
\end{equation*}
\begin{theorem}[\textit{Poisson-type integral representation for $p$-Bessel Functions}]\label{GPIR}\ \\
\itshape{Let $p$ satisfy $2/p$ is a positive odd integer, then the following holds.}
\begin{equation*}
\mathcal{J}_{\omega,\varphi}^{[p]}(z) = \frac{\sqrt{\pi}(\frac{2}{p})^{2+\omega}2}{\Gamma(\frac{1}{p})^{2}\Gamma(\omega+\frac{1}{p})} \Bigl(\frac{z}{2}\Bigr)^{\omega} \int_{0}^{\frac{\pi}{2}} \cos_{\varphi}^{[p]}(z \cos^{\frac{2}{p}}\theta) \, \sin^{2\omega}\theta \, (\cos\theta \sin\theta)^{\frac{2}{p}-1} \, d\theta.
\end{equation*}
\end{theorem}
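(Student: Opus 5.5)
The plan is to prove the identity coefficientwise. We expand $\cos_{\varphi}^{[p]}$ in its defining power series, integrate term by term in $\theta$ using the Beta integral, and check that the resulting series is exactly the series (\ref{p-Bessel-S}) with $r$ replaced by $z$. Throughout, $\mathcal{J}_{\omega,\varphi}^{[p]}(z)$ for complex $z$ means the series (\ref{p-Bessel-S}) with $(z/2)^{\omega}$ taken on the principal branch.

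\textbf{Step 1: both sides are well defined.} I first show that the radial series defining $\mathcal{J}_{\omega,\varphi}^{[p]}$ and the series defining $\cos_{\varphi}^{[p]}$ converge locally uniformly on $\mathbb{C}$. The key input is a bound $|\Phi_{k,\varphi}^{[p]}| \le C^{k}\,\Gamma(\frac{2}{p}(k+1))/k!$ with $C$ independent of $\varphi$. To get it:
\begin{itemize}
\item use $\sum_{n}\binom{k}{n}=2^{k}$ for the combinatorial part;
\item bound the Beta ratio $B(\frac{2}{p}(n+\frac12),\frac{2}{p}(k-n+\frac12))/B(n+\frac12,k-n+\frac12)$ by $C^{k}$, via Stirling's formula (or the elementary bound $B(x,y)\le \Gamma(x)\Gamma(y)/\Gamma(x+y)$ together with Gamma-ratio estimates);
\item use $|\cos^{4/p}\varphi|,|\sin^{4/p}\varphi|\le 1$.
\end{itemize}
With this bound, the $k$-th coefficient of $\cos_{\varphi}^{[p]}$ is $\mathcal{O}\bigl(C^{k}\Gamma(\frac{2}{p}(k+1))/(k!^{2}\,\Gamma(\frac{2k+1}{p})4^{k})\bigr)=\mathcal{O}(k^{1/p}C^{k}/k!^{2})$. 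Hence $\cos_{\varphi}^{[p]}$ is entire, and likewise $z^{-\omega}\mathcal{J}_{\omega,\varphi}^{[p]}(z)$ is entire. I expect this estimate on $\Phi_{k,\varphi}^{[p]}$ to be the main (if routine) obstacle: the Gamma-function bookkeeping must be done carefully, because the interchange in Step 2 rests on it.

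\textbf{Step 2: termwise integration.} For fixed $z$, the series
\[
\sum_{k}\frac{(-1)^k}{(2k)!!}\,\frac{\sqrt{\pi}\,\Phi_{k,\varphi}^{[p]}}{\Gamma(\frac{2k+1}{p})2^{k}}\,z^{2k}\cos^{\frac{4k}{p}}\theta
\]
is dominated on $[0,\pi/2]$ by the convergent series $\sum_{k}|c_k||z|^{2k}$, where $c_k$ is the coefficient of $z^{2k}$ in $\cos_{\varphi}^{[p]}$. The weight $\sin^{2\omega}\theta(\cos\theta\sin\theta)^{2/p-1}$ is integrable, since $2/p\ge1$. Uniform convergence therefore allows sum and integral to be interchanged. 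The hypothesis that $2/p$ is odd guarantees that $\cos^{2/p}\theta$ and the powers $(z\cos^{2/p}\theta)^{2k}$ are unambiguous real-analytic expressions, so no branch issues arise inside the integral.

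\textbf{Step 3: the Beta integral.} Using $\int_0^{\pi/2}\cos^{2a-1}\theta\sin^{2b-1}\theta\,d\theta=\frac12 B(a,b)$ with $a=\frac{2k+1}{p}$ and $b=\omega+\frac1p$, we get
\[
\int_0^{\pi/2}\cos^{\frac{4k}{p}}\theta\,\sin^{2\omega}\theta\,(\cos\theta\sin\theta)^{\frac{2}{p}-1}d\theta=\frac{\Gamma(\frac{2k+1}{p})\Gamma(\omega+\frac1p)}{2\,\Gamma(\frac{2}{p}(k+1)+\omega)}.
\]

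\textbf{Step 4: matching coefficients.} Multiply by the prefactor $\frac{2\sqrt{\pi}(2/p)^{2+\omega}}{\Gamma(\frac1p)^2\Gamma(\omega+\frac1p)}(z/2)^{\omega}$ and use $(2k)!!=2^{k}k!$. Then:
\begin{itemize}
\item the factors $\Gamma(\frac{2k+1}{p})$ and $\Gamma(\omega+\frac1p)$ cancel;
\item the factors $2$ and $\frac12$ cancel;
\item $\sqrt{\pi}\cdot\sqrt{\pi}=\pi$;
\item $z^{2k}/4^{k}=(z/2)^{2k}$.
\end{itemize}
The $k$-th term therefore becomes $(\frac{2}{p})^{2+\omega}\frac{\pi}{\Gamma(\frac1p)^2}\frac{(-1)^k}{k!\,\Gamma(\frac{2}{p}(k+1)+\omega)}(\frac{z}{2})^{2k+\omega}\Phi_{k,\varphi}^{[p]}$. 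This is exactly the $k$-th term of (\ref{p-Bessel-S}), which proves the identity.
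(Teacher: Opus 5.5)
Your proposal is correct and follows essentially the paper's route: termwise Beta-integral evaluation of the defining series. You read it from the integral side and use the trigonometric form $\int_0^{\pi/2}\cos^{2a-1}\theta\sin^{2b-1}\theta\,d\theta=\frac12 B(a,b)$ directly, whereas the paper rewrites $1/\Gamma(\frac{2}{p}(k+1)+\omega)$ via $B$ and then substitutes $t=s^{p}$ and $s=\cos^{2/p}\theta$. One small correction: the oddness of $2/p$ plays no role in your Step 2, since $\cos\theta\ge 0$ on $[0,\pi/2]$ and $2k$ is an integer, so your argument actually works for every $2/p\in\mathbb{N}$; in the paper that hypothesis only enters through the factorial form of $\cos_{\varphi}^{[p]}$ used in its entireness argument.
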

This complex-analytic form confirms that $\mathcal{J}_{\omega,\varphi}^{[p]}$ is holomorphic on $z\in \mathbb{C}\setminus\mathbb{R}_{-}$, and all results derived for $r \ge 0$, including fractional derivative relations and real-analytic integral formulas, remain valid in the complex setting. In particular, this integral representation, being a one-variable function, is suitable for future studies using residue calculus and contour deformation, whereas the integral representation (Theorem \ref{matcal-J-int0}), defined as a two-variable integral, is primarily useful for asymptotic analysis on the real axis. \par\vspace{5pt}

Taken together, these observations establish $\mathcal{J}_{\omega,\varphi}^{[p]}$ as a genuinely new class of oscillatory kernels, forming an anisotropic analogue of classical Bessel functions in Fourier analysis. Their study not only generalizes Hardy-type lattice point identities but also provides a comprehensive analytic framework for $p$-circle lattice point problems, integrating fractional calculus, real and complex integral representations, and asymptotic analysis.\par\vspace{7pt}

This paper is structured as follows. In the next section, we prove identities involving Erd\'{e}lyi–Kober-type fractional derivatives (Theorem \ref{E-K-derJ}). Section 3 focuses on asymptotic behavior at infinity. It derives an integral representation (Theorem \ref{matcal-J-int0}) and, in particular, obtains an asymptotic formula on the coordinate axes (Theorem \ref{AFCA}). Finally, we extend the $p$-Bessel functions to the complex domain and derive a Poisson-type integral representation (Theorem \ref{GPIR}).

   \section{Erd\'{e}lyi–Kober-Type Fractional Derivative Identities (Proof of Theorem \ref{E-K-derJ})} 
  \hspace{13pt}In this section, we show that the $p$-Bessel functions are naturally related through the Erd\'{e}lyi–Kober-type fractional derivative operators $D_{0+;p,\eta}^{\gamma}$. First, we recall two lemmas used in the proof: an integral formula that raises the order and a differential formula that lowers the order.
  \begin{lemma}[\textit{\cite{K3}, (2.7); Order-raising integral formula} (\textit{analogue of that for $J_{\omega}$})]\label{IF}\ \\
  \itshape{Let $p$ satisfy $2/p\in\mathbb{N}$. Then, for $\omega\geq0$, $\gamma>0$, and $\varphi\in[0,2\pi)$, the following holds.}
  \begin{equation*}
    \mathcal{J}_{\omega+\gamma,\varphi}^{[p]}(r)=\frac{r^{\gamma}}{p^{\gamma-1}\Gamma(\gamma)}\int_{0}^{1}\mathcal{J}_{\omega,\varphi}^{[p]}(\tau r)\tau^{(p-1)\omega+1}(1-\tau^{p})^{\gamma-1}d\tau\qquad\text{for }r>0.
  \end{equation*}
  \end{lemma}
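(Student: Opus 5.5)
The plan is to verify the identity term by term on the defining series (\ref{p-Bessel-S}). Substituting $\tau r$ for $r$, the $k$-th term of $\mathcal{J}_{\omega,\varphi}^{[p]}(\tau r)$ carries the factor $\tau^{2k+\omega}$. Since $2/p\in\mathbb{N}$, the series converges uniformly on compact sets, and $(1-\tau^p)^{\gamma-1}$ is integrable on $[0,1]$ for $\gamma>0$. Hence summation and integration may be interchanged on $[0,1]$. Note that the coefficients $\Phi_{k,\varphi}^{[p]}$ depend only on $k$ and $\varphi$, so they are untouched by the integration; only the radial factor and the Gamma factor need to match.

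The key computation is
\begin{equation*}
\int_0^1 \tau^{2k+\omega+(p-1)\omega+1}(1-\tau^p)^{\gamma-1}\,d\tau=\int_0^1\tau^{2k+p\omega+1}(1-\tau^p)^{\gamma-1}d\tau .
\end{equation*}
With the substitution $t=\tau^p$ this becomes $\frac1p B\bigl(\frac{2k+2}{p}+\omega,\gamma\bigr)=\frac{1}{p}\,\Gamma(\frac{2}{p}(k+1)+\omega)\Gamma(\gamma)/\Gamma(\frac{2}{p}(k+1)+\omega+\gamma)$. Multiplying by the prefactor $r^\gamma/(p^{\gamma-1}\Gamma(\gamma))$ does three things:
\begin{enumerate}[(i)]
\item $\Gamma(\gamma)$ cancels;
\item $\Gamma(\frac{2}{p}(k+1)+\omega)$ cancels against the denominator of the $k$-th series term and is replaced by $\Gamma(\frac{2}{p}(k+1)+\omega+\gamma)$, as required for order $\omega+\gamma$;
\item the factor $r^\gamma/p^\gamma$ combines with $(r/2)^{2k+\omega}$ to give $(r/2)^{2k+\omega+\gamma}\cdot 2^\gamma p^{-\gamma}$.
\end{enumerate}

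It remains to match the constant. The target series for $\mathcal{J}_{\omega+\gamma,\varphi}^{[p]}$ has the prefactor $(2/p)^{2+\omega+\gamma}$ in place of $(2/p)^{2+\omega}$, and the extra factor $(2/p)^\gamma$ is exactly the $2^\gamma p^{-\gamma}$ obtained in (iii). So the series for $\mathcal{J}_{\omega+\gamma,\varphi}^{[p]}(r)$ is recovered term by term, which proves the identity.

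The main obstacle is bookkeeping rather than analysis: the exponent $(p-1)\omega+1$ of $\tau$ must be checked so that the Beta argument is exactly $\frac{2}{p}(k+1)+\omega$, and the powers of $2$ and $p$ must close up. A secondary point is justifying the interchange when $0<\gamma<1$, which dominated convergence handles via the uniform bound on the series over $[0,r]$.
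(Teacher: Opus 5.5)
The paper gives no proof of this lemma. It is quoted from \cite{K3} and used as a black box in the proof of Theorem \ref{E-K-derJ}, so there is no argument in the paper to compare yours with. Your term-by-term verification is the natural self-contained proof. It uses the same Beta-integral device (the substitution $t=s^{p}$ inside $B(\cdot,\cdot)$) that the paper itself uses in the proofs of Theorems \ref{matcal-J-int0} and \ref{GPIR}. The algebra is correct:
\begin{itemize}
\item the $\tau$-exponent is $2k+\omega+(p-1)\omega+1=2k+p\omega+1$;
\item the substitution $t=\tau^{p}$ gives $\frac{1}{p}B(\frac{2}{p}(k+1)+\omega,\gamma)$;
\item the prefactor turns $\Gamma(\frac{2}{p}(k+1)+\omega)$ into $\Gamma(\frac{2}{p}(k+1)+\omega+\gamma)$ and supplies exactly $(2/p)^{\gamma}$;
\item $\Phi_{k,\varphi}^{[p]}$ does not depend on $\omega$.
\end{itemize}

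One step of your justification fails as written. You dominate the integrand by a uniform bound $M$ on the partial sums over $[0,r]$ times the weight $\tau^{(p-1)\omega+1}(1-\tau^{p})^{\gamma-1}$, and you place the difficulty at $\tau=1$ for $0<\gamma<1$. That endpoint is harmless; the problem is at $\tau=0$. For $p<1$ (that is, $2/p\geq3$), the exponent $(p-1)\omega+1$ is $\leq-1$ as soon as $\omega\geq 2/(1-p)$, for example $p=1/2$ with $\omega\geq4$. Then $M\tau^{(p-1)\omega+1}(1-\tau^{p})^{\gamma-1}$ is not integrable on $[0,1]$ and is not a valid majorant.

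The repair is to keep the factor $\tau^{\omega}$ that every term carries. Write $a_k$ for the coefficient of $r^{2k+\omega}$ in (\ref{p-Bessel-S}). For $\tau\in[0,1]$ you have $|S_N(\tau r)|\leq\tau^{\omega}\sum_{k}|a_k|r^{2k+\omega}$. The right-hand side is finite: the defining series converges for every $r\geq0$, so the power series $\sum_k a_k x^{k}$ in $x=r^{2}$ has infinite radius of convergence and therefore converges absolutely. The resulting majorant $C\tau^{p\omega+1}(1-\tau^{p})^{\gamma-1}$ is integrable for all $\omega\geq0$ and $\gamma>0$.

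Equivalently, you can apply Tonelli with your own Beta computation:
\[
\sum_k\int_0^1|\text{term}_k|\,d\tau=\frac{1}{p}\sum_k|a_k|r^{2k+\omega}B\Bigl(\frac{2}{p}(k+1)+\omega,\gamma\Bigr)\leq\frac{1}{p}B\Bigl(\frac{2}{p},\gamma\Bigr)\sum_k|a_k|r^{2k+\omega}<\infty .
\]
This uses that $x\mapsto B(x,\gamma)$ is decreasing. With this correction the proof is complete.
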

  \begin{lemma}[\textit{\cite{K3}, Proposition 2.3; Order-lowering differential formula} (\textit{analogue of that for $J_{\omega}$})]\label{DF}
  \itshape{Let $p$ satisfy $2/p\in\mathbb{N}$. Then, for $\omega\geq0$ and $\varphi\in[0,2\pi)$, the following holds.}
    \begin{equation*}
    \frac{d}{dr}r^{1+(p-1)\omega}\mathcal{J}_{\omega+1,\varphi}^{[p]}(r)=r^{1+(p-1)\omega}\mathcal{J}_{\omega,\varphi}^{[p]}(r)\qquad\text{for }r>0.
    \end{equation*}
  \end{lemma}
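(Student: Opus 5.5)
The plan is to prove the formula by termwise differentiation of the defining series (\ref{p-Bessel-S}). The key structural fact is that the angular coefficients $\Phi_{k,\varphi}^{[p]}$ in (\ref{Phi-co}) do not depend on the order $\omega$. So the orders $\omega+1$ and $\omega$ can be compared coefficient by coefficient, exactly as for the classical identity $\frac{d}{dr}(r^{\nu+1}J_{\nu+1})=r^{\nu+1}J_{\nu}$.

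\textbf{Step 1 (power series on both sides).} Write $C_p=\pi/\Gamma(\frac1p)^2$. Multiplying (\ref{p-Bessel-S}) with order $\omega+1$ by $r^{1+(p-1)\omega}$ gives
\begin{equation*}
r^{1+(p-1)\omega}\mathcal{J}_{\omega+1,\varphi}^{[p]}(r)=C_p\Bigl(\frac{2}{p}\Bigr)^{3+\omega}\sum_{k=0}^{\infty}\frac{(-1)^k\Phi_{k,\varphi}^{[p]}}{k!\,\Gamma(\frac{2}{p}(k+1)+\omega+1)}\Bigl(\frac12\Bigr)^{2k+\omega+1}r^{2k+p\omega+2}.
\end{equation*}
The exponent arises as $(2k+\omega+1)+1+(p-1)\omega=2k+p\omega+2$. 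Likewise, $r^{1+(p-1)\omega}\mathcal{J}_{\omega,\varphi}^{[p]}(r)$ is the analogous series with powers $r^{2k+p\omega+1}$ and Gamma factor $\Gamma(\frac2p(k+1)+\omega)$.

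\textbf{Step 2 (coefficient matching).} Differentiating the $k$-th term produces the factor $2k+p\omega+2$. Together with the surplus constants $\frac{2}{p}\cdot\frac12=\frac1p$, this factor becomes $\frac{1}{p}(2k+2+p\omega)=\frac{2}{p}(k+1)+\omega$. The functional equation $\Gamma(x+1)=x\Gamma(x)$ with $x=\frac2p(k+1)+\omega$ then cancels it against $\Gamma(\frac2p(k+1)+\omega+1)$. This leaves exactly the $k$-th term of $r^{1+(p-1)\omega}\mathcal{J}_{\omega,\varphi}^{[p]}(r)$, and the two series agree term by term.

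\textbf{Step 3 (justification, the only real obstacle).} It remains to justify differentiating under the sum for $r>0$. After factoring out $r^{p\omega+2}$, the series is a power series in $r^2$. It therefore suffices to show that this power series has infinite radius of convergence. For that I would bound $\Phi_{k,\varphi}^{[p]}$ crudely, using $|\cos^{4/p}\varphi|,|\sin^{4/p}\varphi|\le1$ (note $4/p$ is an even integer):
\begin{itemize}
\item the binomial-type factor satisfies $\sum_n\frac{1}{n!(k-n)!}=\frac{2^k}{k!}$;
\item the ratio of Beta functions is at most $C^k$ for some $C=C(p)$, by Stirling's formula.
\end{itemize}
Combining these, $|\Phi_{k,\varphi}^{[p]}|\le \Gamma(\frac2p(k+1))\,(2C)^k/k!$. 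Divided by $k!\,\Gamma(\frac2p(k+1)+\omega)$, the general term is $O\bigl((2C)^k r^{2k}/(k!)^2\bigr)$ uniformly in $\varphi$, since $\Gamma(x)/\Gamma(x+\omega)\le 1$ for large $x$ and $\omega\ge0$. The power series is therefore entire, and termwise differentiation is legitimate on $r>0$; the multiplication by the positive power $r^{p\omega+2}$ is harmless there. This also recovers the uniform convergence on compacta asserted in the introduction.
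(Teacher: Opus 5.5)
The paper gives no proof of this lemma; it quotes it from \cite{K3}, Proposition 2.3. Your argument is therefore a self-contained replacement, and it is correct. The exponent $2k+p\omega+2$ is right, as is the surplus constant $\frac{2}{p}\cdot\frac{1}{2}=\frac{1}{p}$. The factor $\frac{1}{p}(2k+2+p\omega)=\frac{2}{p}(k+1)+\omega$ cancels against $\Gamma(\frac{2}{p}(k+1)+\omega+1)$ as you say. The $\omega$-independence of $\Phi_{k,\varphi}^{[p]}$ is exactly what allows the term-by-term match.

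Your convergence step can be simplified, because no Stirling estimate is needed. The function $B(x,y)=\int_{0}^{1}t^{x-1}(1-t)^{y-1}\,dt$ is decreasing in each argument, and $2/p\geq1$. Hence the Beta ratio in (\ref{Phi-co}) is at most $1$ for every $0\le n\le k$. This includes the edge indices $n=0,k$, where one argument stays bounded and a naive Stirling comparison would need separate care. The same monotonicity underlies the bound $\Gamma(\frac{2m_1+1}{p})\Gamma(\frac{2m_2+1}{p})/\Gamma(\frac{2}{p}(m_1+m_2+1))\le\Gamma(\frac{1}{p})^2/\Gamma(\frac{2}{p})$ that the paper uses in Section 4.

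A genuinely different route is available from the paper's own toolkit. Lemma~\ref{IF} with $\gamma=1$ and the substitution $s=\tau r$ give
\[
r^{1+(p-1)\omega}\mathcal{J}_{\omega+1,\varphi}^{[p]}(r)=\int_{0}^{r}s^{1+(p-1)\omega}\mathcal{J}_{\omega,\varphi}^{[p]}(s)\,ds.
\]
The integrand is $O(s^{1+p\omega})$ near $0$, so the fundamental theorem of calculus yields the lemma at once.

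That route shows the differential formula is simply the $\gamma=1$ case of the order-raising integral formula, but it presupposes Lemma~\ref{IF}. Your series computation does not depend on it. It also makes transparent why the weight $r^{1+(p-1)\omega}$ is the right one for the normalization $(2/p)^{2+\omega}/\Gamma(\frac{2}{p}(k+1)+\omega)$.
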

  Next, for $\alpha\in\mathbb{C}\setminus\{0\}$ satisfying $\mathrm{Re}(\alpha)>0$, with $n:=[\mathrm{Re}(\alpha)]+1$, $p>0$, and $\eta\in\mathbb{C}$, we introduce the Erd\'{e}lyi–Kober–type fractional integral and fractional derivative (\cite{FDE}, (2.6.1), (2.6.29)).
  \begin{align}
    (I_{0+;p,\eta}^{\gamma})f(r)&:=\frac{p}{\Gamma(\gamma)}\int_{0}^{1}\frac{\tau^{p(\eta+1)-1}f(\tau r)}{(1-\tau^{p})^{1-\gamma}}d\tau,\label{E-K-I}\\
    (D_{0+;p,\eta}^{\gamma})f(r)&:=r^{-p\eta}\Bigl(\frac{1}{pr^{p-1}}\frac{d}{dr}\Bigr)r^{p(1+\eta)}(I_{0+;p,\eta+\gamma}^{1-\gamma})f(r)\qquad\text{for }r>0.\label{E-K-D}
  \end{align}
  \begin{proof}[Proof of Theorem \ref{E-K-derJ}]
  Let $p$ satisfy $2/p \in \mathbb{N}$, and assume that $\omega \geq 0$ and $0 < \gamma < 1$. In this case, for the $p$-Bessel functions $\mathcal{J}_{\omega+\gamma,\varphi}^{[p]}$, applying the fractional derivative of order $\gamma$ with $\eta=(1-1/p)\omega+(2-\gamma)/p-1$ in (\ref{E-K-D}), it follows from the definition of the fractional integral (\ref{E-K-I}) that it can be expressed, as in 
  \begin{align*}
    (D&_{0+;p,(1-\frac{1}{p})\omega+\frac{2-\gamma}{p}-1}^{\gamma})\mathcal{J}_{\omega+\gamma,\varphi}^{[p]}(r)\\
    &=r^{(1-p)\omega+p+(\gamma-2)}\Bigl(\frac{r^{1-p}}{p}\frac{d}{dr}\Bigr)\frac{p\ r^{(p-1)\omega+(2-\gamma)}}{\Gamma(1-\gamma)}\int_{0}^{1}\frac{\tau^{(p-1)\omega+1+(p-1)\gamma}\mathcal{J}_{\omega+\gamma,\varphi}^{[p]}(\tau r)}{(1-\tau^{p})^{\gamma}}d\tau\\
    &=r^{(1-p)\omega+\gamma-1}\frac{d}{dr}\frac{r^{(p-1)\omega+2-\gamma}}{\Gamma(1-\gamma)}\int_{0}^{1}\mathcal{J}_{\omega+\gamma,\varphi}^{[p]}(\tau r)\ 
    \tau^{(p-1)(\omega+\gamma)+1}{(1-\tau^{p})^{(1-\gamma)-1}}d\tau\\
    &=\frac{r^{(1-p)\omega+\gamma-1}}{p^{\gamma}}\frac{d}{dr}r^{(p-1)\omega+1}\frac{r^{1-\gamma}}{p^{(1-\gamma)-1} \Gamma(1-\gamma)}\int_{0}^{1}\mathcal{J}_{\omega+\gamma,\varphi}^{[p]}(\tau r)\ 
    \tau^{(p-1)(\omega+\gamma)+1}{(1-\tau^{p})^{(1-\gamma)-1}}d\tau,
  \end{align*}
  in terms of the usual first-order differentiation and integration. Furthermore, by Lemma \ref{IF} and Lemma \ref{DF}, which describe the properties of $\mathcal{J}_{\omega,\varphi}^{[p]}$, we obtain the following concise representation. 
  \begin{align*}
    (D_{0+;p,(1-\frac{1}{p})\omega+\frac{2-\gamma}{p}-1}^{\gamma})\mathcal{J}_{\omega+\gamma,\varphi}^{[p]}(r)
    &=\Bigl(\frac{r}{p}\Bigr)^{\gamma}r^{-1-(p-1)\omega}\frac{d}{dr}r^{1+(p-1)\omega}\mathcal{J}_{(\omega+\gamma)+(1-\gamma),\varphi}^{[p]}(r)\\
    &=\Bigl(\frac{r}{p}\Bigr)^{\gamma}r^{-1-(p-1)\omega}r^{1+(p-1)\omega}\mathcal{J}_{\omega,\varphi}^{[p]}(r).
  \end{align*}
  As a result, we obtain the following order-lowering differential formula for $\mathcal{J}_{\omega,\varphi}^{[p]}$ in the sense of the Erd\'{e}lyi–Kober-type fractional derivative. 
  \begin{equation}\label{E-K-derJ+}
    (D_{0+;p,(1-\frac{1}{p})\omega+\frac{2-\gamma}{p}-1}^{\gamma})\mathcal{J}_{\omega+\gamma,\varphi}^{[p]}(r)=\Bigl(\frac{r}{p}\Bigr)^{\gamma}\mathcal{J}_{\omega,\varphi}^{[p]}(r)\qquad\text{for }0<\gamma<1.
  \end{equation}
  \end{proof}
  
  \begin{remark}This formula can be regarded as the counterpart of the following order-raising formula for $\mathcal{J}_{\omega,\varphi}^{[p]}$ in the sense of the Erd\'{e}lyi–Kober-type fractional integral (\cite{K3}, (2.9)).
  \begin{equation}\label{E-K-intJ}
    (I_{0+;p,(1-\frac{1}{p})\omega+\frac{2}{p}-1}^{\gamma})\mathcal{J}_{\omega,\varphi}^{[p]}(r)=\Bigl(\frac{p}{r}\Bigr)^{\gamma}\mathcal{J}_{\omega+\gamma,\varphi}^{[p]}(r)\qquad\text{for }\gamma>0.
  \end{equation}
  Thus, the formulas obtained in this section show that the Erd\'{e}lyi–Kober fractional operators naturally describe the order-shifting structure of the $p$-Bessel functions.
  \end{remark}
  \vspace{10pt}
  Moreover, as an application of (\ref{E-K-intJ}), we derive an additional formula for $\mathcal{J}_{\omega,\varphi}^{[p]}$. Specifically, we define $\eta(p,\omega):=(1-\frac{1}{p})\omega+\frac{2}{p}-2$, with $E$ denoting the identity operator. By applying the first-order Erd\'{e}lyi–Kober-type fractional derivative, we obtain the following fractional differential equation.
  \begin{proposition}\itshape{Let $p>0$ satisfy $2/p\in\mathbb{N}$, then the following Erd\'{e}lyi–Kober–type fractional differential equation admits $u(r):=\mathcal{J}_{\omega,\varphi}^{[p]}(r)$ as a solution.}
  \begin{equation*}
    pr^{p}\Bigl(D_{0+;p,\eta(p,\omega)}^{1}\Bigr)u(r)+r\frac{d}{dr}\Bigl(I_{0+;p,\eta(p,\omega)+1}^{1}-E\Bigr)u(r)+(p-1)(\omega-2)\Bigl(I_{0+;p,\eta(p,\omega)+1}^{1}-E\Bigr)u(r)=0.
  \end{equation*}
  \end{proposition}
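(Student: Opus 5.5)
The plan is to make every operator in the equation explicit, using the results already stated for $\mathcal{J}_{\omega,\varphi}^{[p]}$. Afterwards the equation should collapse to the order-lowering formula of Lemma \ref{DF}, or to a consequence of it. Write $u=\mathcal{J}_{\omega,\varphi}^{[p]}$, $v=\mathcal{J}_{\omega+1,\varphi}^{[p]}$, $\eta=\eta(p,\omega)$ and $c:=1+(p-1)\omega$. The first observation is that $\eta+1=(1-\frac1p)\omega+\frac2p-1$ is exactly the index in the order-raising identity (\ref{E-K-intJ}). Taking $\gamma=1$ there gives
\begin{equation*}
\bigl(I_{0+;p,\eta+1}^{1}\bigr)u(r)=\frac{p}{r}\,v(r),\qquad\text{hence}\qquad \bigl(I_{0+;p,\eta+1}^{1}-E\bigr)u=\frac{p}{r}v-u .
\end{equation*}

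For the first-order derivative $D^{1}_{0+;p,\eta}$, I would read (\ref{E-K-D}) with $\gamma=1$, interpreting $I^{0}_{0+;p,\eta+1}$ as the identity. This is the standard convention, and it is the limit $\gamma\to0^{+}$ of (\ref{E-K-I}). With $p(1+\eta)=(p-1)\omega+2-p=c+1-p$, a direct computation gives
\begin{equation*}
p r^{p}\bigl(D^{1}_{0+;p,\eta}\bigr)u=r^{1-p\eta}\frac{d}{dr}\bigl(r^{p(1+\eta)}u\bigr)=(c+1-p)\,r^{p}u+r^{p+1}u' ,
\end{equation*}
which uses $1-p\eta+p(1+\eta)-1=p$. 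Next I would eliminate $u$ in favour of $v$ using Lemma \ref{DF}, which is $\frac{d}{dr}(r^{c}v)=r^{c}u$, i.e. $u=v'+\frac{c}{r}v$. Substituting this into all three terms turns the left-hand side into an expression in $v,v',v''$ with explicit powers of $r$. The second and third terms combine to $-r\,(v'+\tfrac{c}{r}v-\tfrac{p}{r}v)'-(p-1)(\omega-2)(v'+\tfrac{c}{r}v-\tfrac{p}{r}v)$, so the coefficients $c$, $p$ and $(p-1)(\omega-2)$ should be arranged to cancel the lower-order terms. The first step of the verification is to collect the coefficients of $v'$ and $v/r$ and check that they vanish.

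The main obstacle is that the first term carries the factor $r^{p}$, whereas the other two are homogeneous of degree $0$ in $r$. A pure first-order manipulation with Lemma \ref{DF} alone therefore cannot produce cancellation unless a second relation links $u$ and $v$ at a shifted power of $r$. If the coefficient bookkeeping does not close, I would check the identity termwise on the series (\ref{p-Bessel-S}) instead. On a monomial $r^{2k+\omega}$ the operator $I^{1}_{0+;p,\eta+1}$ acts diagonally by the Beta-integral $p\int_0^1\tau^{p(\eta+2)-1+2k+\omega}d\tau$, and $D^{1}$ acts diagonally as computed above. So each term of the equation maps the series to an explicit series in $r$, and the claim reduces to a coefficient identity among $\Gamma(\frac2p(k+1)+\omega)$, the shifted index, and the $\Phi^{[p]}_{k,\varphi}$. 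In that comparison the factor $r^{p}$ shifts the exponent from $2k+\omega$ to $2k+\omega+p$. The delicate point is whether, under $2/p\in\mathbb N$, these exponents realign with the other terms, possibly through the Pochhammer-type structure $(\frac2p(k+1)-1)!$ in $\Phi^{[p]}_{k,\varphi}$. I expect this matching of exponents and coefficients to be where the actual work lies. I would try it first for $p=2$, where the equation must reduce to a classical Bessel identity, to calibrate the signs and constants.
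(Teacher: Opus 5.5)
Your plan does not close, and the step you postpone cannot be carried out with your reading of $D^{1}_{0+;p,\eta}$. Taking $I^{0}=E$ with a single factor $\frac{1}{pr^{p-1}}\frac{d}{dr}$ is in fact the textbook order-one Erd\'{e}lyi--Kober derivative: the standard formula with $n=[\mathrm{Re}(\alpha)]+1=2$ and weight $r^{p(2+\eta)}$ collapses to it. With this operator, however, the proposition is false.

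Write $u=\sum_{k}a_{k}r^{2k+\omega}$ with $a_{0}\neq0$, and set $\lambda_{k}:=\frac{2}{p}(k+1)+\omega$. Then $I^{1}_{0+;p,\eta+1}r^{2k+\omega}=\lambda_{k}^{-1}r^{2k+\omega}$, and $r\frac{d}{dr}+(p-1)(\omega-2)$ multiplies $r^{2k+\omega}$ by $p(\lambda_{k}-2)$. So the last two terms of the equation add up to $-p\sum_{k}a_{k}\frac{(\lambda_{k}-1)(\lambda_{k}-2)}{\lambda_{k}}r^{2k+\omega}$. Your first term, $p\sum_{k}a_{k}(\lambda_{k}-1)r^{2k+\omega+p}$, has no $r^{\omega}$ component. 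The $r^{\omega}$ coefficient therefore forces $\lambda_{0}\in\{1,2\}$. This fails for every $\omega\ge0$ when $2/p\ge3$, and for $J_{\omega}$ with $\omega\notin\{0,1\}$ when $p=2$. The degree mismatch you noticed is a genuine obstruction, not bookkeeping that a termwise comparison could realign.

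The paper's proof reads $D^{1}$ differently. It uses two derivative factors and the inner integral $I^{1}_{0+;p,\eta+1}$ (the operator appearing in the statement), but keeps the weight $r^{p(1+\eta)}$ of (\ref{E-K-D}):
\begin{equation*}
D^{1}_{0+;p,\eta}u=r^{-p\eta}\Bigl(\frac{1}{pr^{p-1}}\frac{d}{dr}\Bigr)^{2}r^{p(1+\eta)}I^{1}_{0+;p,\eta+1}u .
\end{equation*}
Each derivative factor lowers exponents by $p$, so $pr^{p}D^{1}$ preserves degree. Termwise it gives $+p\sum_{k}a_{k}\frac{(\lambda_{k}-1)(\lambda_{k}-2)}{\lambda_{k}}r^{2k+\omega}$, which cancels the sum above.

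The paper argues without series, in three steps:
\begin{enumerate}
\item By (\ref{E-K-intJ}), $r^{p(1+\eta)}I^{1}_{0+;p,\eta+1}u=p\,r^{c-p}v$.
\item The inner factor together with Lemma~\ref{DF} gives $r^{1-p}\frac{d}{dr}\bigl(r^{-p}r^{c}v\bigr)=r^{p\eta}\bigl(u-\frac{p}{r}v\bigr)=-r^{p\eta}(I^{1}_{0+;p,\eta+1}-E)u$, where $p\eta=(p-1)(\omega-2)$.
\item The outer factor and the product rule then yield $pr^{p}D^{1}u=-\bigl(r\frac{d}{dr}+(p-1)(\omega-2)\bigr)(I^{1}_{0+;p,\eta+1}-E)u$.
\end{enumerate}

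So your ingredients, (\ref{E-K-intJ}) and Lemma~\ref{DF}, were the right ones; what you lacked is this operator. Under this reading, those two facts enter only through $\frac{d}{dr}\int_{0}^{r}s^{c}u(s)\,ds=r^{c}u(r)$. Hence the identity in fact holds for every $C^{1}$ function $u$ for which $I^{1}_{0+;p,\eta+1}u$ exists, not only for $\mathcal{J}_{\omega,\varphi}^{[p]}$.
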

  \begin{proof}
  For the $p$-Bessel functions $\mathcal{J}_{\omega,\varphi}^{[p]}$, applying the fractional derivative of order 1 with $\eta(p,\omega):=(1-\frac{1}{p})\omega+\frac{2}{p}-2$ in (\ref{E-K-D}), it follows from the definition of the fractional integral (\ref{E-K-I}) that it can be expressed, as in 
  \begin{align*}
    \Bigl(&D_{0+;p,\eta(p,\omega)}^{1}\Bigr)\mathcal{J}_{\omega,\varphi}^{[p]}(r)\\
    &=r^{-p\eta(p,\omega)}\Bigl(\frac{1}{pr^{p-1}}\frac{d}{dr}\Bigr)\frac{1}{pr^{p-1}}\frac{d}{dr}r^{p(1+\eta(p,\omega))}\Bigl(I_{0+;p,\eta(p,\omega)+1}^{1}\Bigr)\mathcal{J}_{\omega,\varphi}^{[p]}(r)\\
    &=r^{-p\eta(p,\omega)}\Bigl(\frac{1}{pr^{p-1}}\frac{d}{dr}\Bigr)\frac{1}{pr^{p-1}}\frac{d}{dr}r^{p(1+\eta(p,\omega))}\frac{p}{\Gamma(1)}\int_{0}^{1}\frac{\tau^{p(\eta(p,\omega)+2)-1}\mathcal{J}_{\omega,\varphi}^{[p]}(\tau r)}{(1-\tau^{p})^{0}}d\tau\\
    &=\frac{1}{pr^{(p-1)(\omega-1)}}\Bigl(\frac{d}{dr}\Bigr)\frac{1}{pr^{p-1}}\frac{d}{dr}r^{-p}r^{1+(p-1)\omega}p\Bigl(r\int_{0}^{1}\mathcal{J}_{\omega,\varphi}^{[p]}(\tau r)\tau^{(p-1)\omega+1}(1-\tau^{p})^{1-1}d\tau \Bigr).
  \end{align*}
  Then, from Lemma \ref{IF}, Lemma \ref{DF}, and (\ref{E-K-intJ}), the desired expression is obtained as follows.
  \begin{align*}
    \Bigl(&D_{0+;p,\eta(p,\omega)}^{1}\Bigr)\mathcal{J}_{\omega,\varphi}^{[p]}(r)\\
    &=\frac{1}{pr^{(p-1)(\omega-1)}}\Bigl(\frac{d}{dr}\Bigr)\frac{1}{r^{p-1}}\frac{d}{dr}r^{-p}r^{1+(p-1)\omega}\mathcal{J}_{\omega+1,\varphi}^{[p]}(r)\\
    &=\frac{1}{pr^{(p-1)(\omega-1)}}\Bigl(\frac{d}{dr}\Bigr)\frac{1}{r^{p-1}}\Bigl(-pr^{-(p+1)}\cdot r^{1+(p-1)\omega}\mathcal{J}_{\omega+1,\varphi}^{[p]}(r)+r^{-p}\cdot r^{1+(p-1)\omega}\mathcal{J}_{\omega,\varphi}^{[p]}(r)\Bigr)\\
    &=\frac{1}{pr^{(p-1)(\omega-1)}}\frac{d}{dr}\Bigl(-r^{(p-1)(\omega-2)}\Bigl(\frac{p}{r}\Bigr)\mathcal{J}_{\omega+1,\varphi}^{[p]}(r)+r^{(p-1)(\omega-2)}\mathcal{J}_{\omega,\varphi}^{[p]}(r)\Bigr)\\
    &=\frac{-1}{pr^{(p-1)(\omega-1)}}\frac{d}{dr}r^{(p-1)(\omega-2)}\Bigl(I_{0+;p,\eta(p,\omega)+1}^{1}-E\Bigr)\mathcal{J}_{\omega,\varphi}^{[p]}(r)\\
    &=\frac{-r^{(p-1)(\omega-2)-1}}{pr^{(p-1)(\omega-1)}}\Bigl((p-1)(\omega-2)\Bigl(I_{0+;p,\eta(p,\omega)+1}^{1}-E\Bigr)\mathcal{J}_{\omega,\varphi}^{[p]}(r)+r\frac{d}{dr}\Bigl(I_{0+;p,\eta(p,\omega)+1}^{1}-E\Bigr)\mathcal{J}_{\omega,\varphi}^{[p]}(r)\Bigr).
  \end{align*}
  \end{proof}  
\section{Integral Representations and Asymptotics at Infinity}
  \hspace{13pt}In this section, we emphasize that the asymptotic formula for Bessel functions at infinity plays a crucial role in the lattice point problem. Under restricted conditions, we then study the asymptotic behavior (Theorem \ref{AFCA}) of our $p$-Bessel functions via oscillatory integral representations (Theorem \ref{matcal-J-int0}), as two of the main results of this paper.
  \subsection{\normalsize{Importance of p-Bessel Function Asymptotics in Lattice Point Identities}}
  \hspace{13pt}The asymptotic formula for Bessel functions (\cite{Watson}, p199; (1)) 
  \begin{equation}\label{J-asym.}
    J_{\omega}(r)=\sqrt{\frac{2}{\pi r}}\cos\Bigl(r-\frac{2\omega+1}{4}\pi\Bigr)+\mathcal{O}(r^{-\frac{3}{2}})\quad\text{for }r\to\infty
  \end{equation}
   plays an important role in deriving conjectures from certain identities related to the classical circle lattice point problem and in obtaining new results when combined with these identities. \par\vspace{5pt}
  For example, given that Hardy's identity (\ref{HI}) conditionally converges, by formally applying the asymptotic formula (\ref{J-asym.}), G.H. Hardy and E. Landau in \cite{Hardy-1917} conjectured that
    \begin{equation*}
    P_{2}(r)=\mathcal{O}(r^{\frac{1}{2}+\varepsilon}),\quad \text{but not }\mathcal{O}(r^{\frac{1}{2}})\quad\text{as }r\to\infty
  \end{equation*}
  for any sufficiently small $\varepsilon>0$ (\textit{Hardy's conjecture}). \par\vspace{5pt}  
  Another example is the identity due to S. Kuratsubo and E. Nakai \cite[(2.6)]{Kuratsubo-2022}, which gave a harmonic analytic claim equivalent to the Hardy's conjecture (see \cite[Theorem 7.1]{Kuratsubo-2022}).
  \begin{equation}\label{D-J}
      D_{\beta}(s:x)-\mathcal{D}_{\beta}(s:x)=s^{\beta+1}2^{\beta+1}\pi\sum_{ n\in\mathbb{Z}^{2}\setminus\{0\}}\frac{J_{\beta+1}(2\pi\sqrt{s}|x-n|)}{(2\pi\sqrt{s}|x-n|)^{\beta+1}}\qquad\text{if }\beta>\frac{1}{2}.
  \end{equation}
  Note that for $\beta>-1,\ s>0$, and $x\in\mathbb{R}^{2}$, these functions are defined by
  \begin{equation*}\label{D}
    D_{\beta}(s:x):=\frac{1}{\Gamma(\beta+1)}\sum_{|m|^{2}<s}(s-|m|^{2})^{\beta}
    e^{2\pi ix\cdot m},\quad 
    \mathcal{D}_{\beta}(s:x):=\frac{1}{\Gamma(\beta+1)}\int_{|\xi|^{2}<s}
    (s-|\xi|^{2})^{\beta}e^{2\pi ix\cdot \xi}d\xi,
  \end{equation*}
  in particular, $D_{0}(r^{2}:0)-\mathcal{D}_{0}(r^{2}:0)=P_{2}(r)$ holds. \par
  In this context, the asymptotic formula (\ref{J-asym.}) helps identify the range of variables $\beta$ for which this equation (\ref{D-J}) holds. \par\vspace{5pt}
  Therefore, from these two examples, it is evident that the asymptotic behavior of $p$-Bessel functions plays a crucial role in studying the lattice point problem for general $p$-circles.
  
  \subsection{\normalsize{Uniform Asymptotics of the 0-Order p-Bessel Function and Axis Dominance}}
  \hspace{13pt}For $p$ satisfying $2/p\in\mathbb{N}\setminus\{1,2\}$, the asymptotic estimate in $\mathcal{O}$ notation for the $p$-Bessel function of order zero is already known from our previous results (see \cite[Theorem 1.5]{K2}). 
  \begin{proposition}[\textit{Uniform asymptotics with respect to $\varphi$}]\label{uni-zero-ord}\ \\ 
    \itshape{Let $p$ satisfy $2/p\in\mathbb{N}\setminus\{1,2\}$. Then, the following holds uniformly with respect to $\varphi\in[0,2\pi)$.}\vspace{-5pt}
    \begin{equation*}\vspace{3pt}
      \mathcal{J}_{0,\varphi}^{[p]}(r)=\mathcal{O}(r^{-\frac{p}{2}})\qquad\text{as }r\to\infty.
    \end{equation*}
  \end{proposition}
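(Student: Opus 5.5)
The plan is to identify $\mathcal{J}_{0,\varphi}^{[p]}$ with a Fourier transform of a $p$-radial measure, and then estimate that oscillatory integral uniformly in the direction. From the single-integral formula in Theorem \ref{matcal-J-int0},
\[
\mathcal{J}_{0,\varphi}^{[p]}(r)=\frac{4}{p\Gamma(\frac{1}{p})^{2}}\int_{0}^{1}\cos\bigl(r(1-u^{p})^{\frac{1}{p}}\cos^{\frac{2}{p}}\varphi\bigr)\cos\bigl(ru\sin^{\frac{2}{p}}\varphi\bigr)(1-u^{p})^{\frac{1}{p}-1}\,du .
\]
Put $\xi=(\cos^{2/p}\varphi,\sin^{2/p}\varphi)$ and parametrize the quarter $p$-circle by $x_2=u$, $x_1=(1-u^p)^{1/p}$. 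Expanding the product of cosines, the integral becomes the Fourier transform $\widehat{\sigma}(r\xi)$ of a surface-type measure $d\sigma=(1-u^p)^{1/p-1}du$ carried by the unit $p$-circle. Note that $|\xi|$ is bounded above and below for all $\varphi$, since $|\cos\varphi|^{4/p}+|\sin\varphi|^{4/p}\ge 2^{1-2/p}$. Hence it suffices to bound $\widehat{\sigma}(r\xi)$ by $\mathcal{O}(r^{-p/2})$ uniformly over the directions $\xi/|\xi|$.

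Next, split the curve with a smooth partition of unity into three pieces.
\begin{enumerate}
\item Near the axis point $u=0$, i.e.\ $x=(1,0)$: in this region $x_1=1-\frac{1}{p}u^p+\dots$, so the curve is flat to order $p$. This is the worst contact order, since $p\le 1$ here ($2/p\ge 3$, so $p\le 2/3$).
\item Near $u=1$: here I substitute $v=(1-u^p)^{1/p}$, which converts the weight $(1-u^p)^{1/p-1}du$ into a comparable weight in $v$. By symmetry this reduces to the first case.
\item The middle arc: there the curve is smooth with nonvanishing curvature. Stationary phase (van der Corput with second derivative) gives $\mathcal{O}(r^{-1/2})$ uniformly, which is at most $r^{-p/2}$ since $p<1$.
\end{enumerate}

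For the axis pieces the phase is $r(\xi_1 x_1(u)+\xi_2 u)$, with $x_1(u)=(1-u^p)^{1/p}$. Here I split again.
\begin{enumerate}
\item If $|\xi_2|\gtrsim r^{-\epsilon}$-type size, the phase derivative is $\xi_2-\xi_1u^{p-1}(1-u^p)^{1/p-1}$. Because $p<1$, the term $u^{p-1}$ blows up at $0$. So there is a unique critical point $u_0\sim(\xi_1/\xi_2)^{1/(1-p)}$, and the second derivative is of size $\xi_1u^{p-2}$. Applying van der Corput's second-derivative test on dyadic pieces around $u_0$, and the first-derivative test away from it, gives bounds of the form $r^{-1/2}u_0^{1-p/2}$ plus boundary terms.
\item If $\xi_2$ is tiny (direction near the axis), the phase is essentially $r\xi_1(1-u^p/p)$, and I use the scaling $u=r^{-1/p}t$.
\end{enumerate}
The boundary contribution at $u=0$ behaves like $\int_0 e^{-ir\xi_1u^p/p}\,du\sim r^{-1/p}$, which is at most $r^{-p/2}$. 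The endpoint weight singularity $(1-u^p)^{1/p-1}$ is harmless at $u=0$ and is absorbed into the reduction near $u=1$.

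The main obstacle will be obtaining uniformity in the transition regime where $u_0$ approaches $0$, i.e.\ the critical point merges with the flat endpoint. There the naive bound $r^{-1/2}\xi_1^{-1/2}u_0^{1-p/2}$ must be balanced against the Hessian degeneracy. I would first try the standard uniform estimate for curves of finite type: decompose $[0,\delta]$ dyadically into $u\sim 2^{-j}$, and apply on each piece the second-derivative bound $\min(2^{-j},(r2^{-j(p-2)})^{-1/2})$. Summing over $j$ gives $\sup_j\min(2^{-j},r^{-1/2}2^{-j(1-p/2)})$, and equating the two terms yields $2^{-j}=r^{-1/p}$. But the maximal Gaussian-curvature loss for the contact order in the dual direction yields the exponent $p/2$, matching the claimed $\mathcal{O}(r^{-p/2})$. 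I would verify this carefully by treating the amplitude's $u$-dependence through integration by parts in the van der Corput lemma, with a bounded-variation amplitude.
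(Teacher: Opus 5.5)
The paper gives no proof of this proposition; it is quoted from \cite[Theorem~1.5]{K2}, so your argument has to stand on its own. The global set-up is sound. With $\xi=(|\cos\varphi|^{2/p},|\sin\varphi|^{2/p})$ you even have $\xi_1^p+\xi_2^p=1$, and the substitution $v=(1-u^p)^{1/p}$ does turn the piece at $u=1$ into the mirror image of the piece at $u=0$. However, the near-cusp analysis, which is the only nontrivial part, does not go through.
\begin{enumerate}
\item[(i)] For $p<1$ the point $u=0$ is not flat: $x_1''(u)\approx(1-p)u^{p-2}\to+\infty$. It is a cusp of infinite curvature, so finite-type ``contact order'' heuristics do not apply.
\item[(ii)] The critical point $u_0\approx(\xi_1/\xi_2)^{1/(1-p)}$ approaches the cusp when $\xi_1\to0$, i.e.\ for directions near the cusp normal $(0,1)$. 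It does not do so when $\xi_2$ is small; then $u_0$ sits near $u=1$. Your case split is organized around the wrong parameter.
\item[(iii)] On $u\sim2^{-j}$ you only have $|\psi''|\asymp r\xi_1 2^{-j(p-2)}$, so the dyadic van der Corput bound is $(r\xi_1)^{-1/2}2^{-j(1-p/2)}$. Once $\xi_1\lesssim1/r$ this gives no decay at all, so a pure second-derivative dyadic sum cannot be uniform.
\item[(iv)] Even with $\xi_1$ suppressed, $\sum_j\min(2^{-j},r^{-1/2}2^{-j(1-p/2)})\asymp r^{-1/2}$, dominated by $j=0$, and the crossover $2^{-j}=r^{-1/p}$ has size $r^{-1/p}$. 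Nothing in your computation produces $r^{-p/2}$, and the closing appeal to a ``maximal Gaussian-curvature loss'' is not an argument.
\end{enumerate}
As written, the uniform bound near the axes is asserted rather than proved.

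The repair is to use both van der Corput tests, split around the critical point.
\begin{enumerate}
\item[(a)] Since $x_1''>0$ on $(0,1)$, the derivative $\psi'(u)=r(\xi_2+\xi_1x_1'(u))$ is increasing and has at most one zero $u_0$.
\item[(b)] If $u_0\le\delta$, then $\xi_1$ is small, $\xi_2\approx1$, and $\xi_1u_0^{p-1}\approx\xi_2$. This gives $\psi''\asymp r\xi_2/u_0$ on $[u_0/2,2u_0]$, so that piece is $O(\sqrt{u_0/r})$. On the rest of the cusp piece $|\psi'|\gtrsim r\xi_2$, and the first-derivative test gives $O(1/r)$; the amplitude $\chi_0(u)(1-u^p)^{1/p-1}$ has bounded variation.
\item[(c)] If $u_0>\delta$, then $\xi_1$ is bounded below and the second-derivative test applies on the whole cusp piece.
\item[(d)] The other term of $\cos A\cos B$, with phase $r(\xi_1x_1(u)-\xi_2u)$, has no critical point and satisfies $|\psi'|\gtrsim r(\xi_1+\xi_2)\gtrsim r$.
\item[(e)] On the middle arc, use the second-derivative test if $\xi_1$ is bounded below and the first-derivative test otherwise.
\end{enumerate}
Together with the symmetric substitution at $u=1$, this gives $O(r^{-1/2})$ uniformly in $\varphi$. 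The stationary-phase contribution \emph{shrinks} as the critical point runs into the cusp, because the curvature blows up while the weight stays bounded in $u$. Since $2/p\ge3$ forces $p<1$, we have $r^{-1/2}\le r^{-p/2}$ for $r\ge1$. This yields the proposition, and in fact a stronger bound than the one quoted from \cite{K2}.
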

  \begin{remark}
    Among $p$-circles with $0<p<1$, we have already seen that when $2/p$ is a natural number, the series representation of the $p$-Bessel functions converges uniformly on compact sets (see \cite[Proposition 2.1]{K3}, ). The same condition also guarantees the non-degeneracy of the phase function in the oscillatory integral representation of the $p$-Bessel functions. This highlights that the condition is both distinctive and essentially unavoidable in the analysis of $p$-circles with cusps.
  \end{remark}
  \par\vspace{10pt}
  By this proposition, we see that for $p$ such that $2/p$ is a natural number greater than or equal to $3$, the $p$-Bessel function of order zero decays uniformly at infinity in $\mathbb{R}^{2}$ with order $-p/2$.
On the other hand, as shown by the theorem in our previous paper cited above (see the following proposition), if the distorted angle $\varphi$ is restricted to an arbitrary compact subset in each quadrant of $\mathbb{R}^{2}$, then a sharper decay estimate of order $r^{-1/2}$ holds. \par
These results suggest that the slower decay in the uniform estimate is associated with the behavior near the coordinate axes. We therefore focus on the coordinate axes and analyze the oscillatory integral representation in order to clarify this behavior for general orders $\omega$.
  \begin{proposition}[\textit{\cite[Theorem 1.4]{K2}}]\label{cpt.uni}
  \itshape{Let $0<p<1$ or $p=2$. Then, the following holds uniformly with respect to $\varphi$ on every compact subset of $[0,2\pi)\setminus\{0,\frac{\pi}{2},\pi,\frac{3}{2}\pi\}$.}
  \begin{equation*}
    \mathcal{J}_{0,\varphi}^{[p]}(r)=\mathcal{O}(r^{-\frac{1}{2}})\qquad\text{as }r\to\infty.
  \end{equation*}
  \end{proposition}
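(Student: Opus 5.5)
The plan is to start from the order-zero integral representation in Theorem \ref{matcal-J-int0} and reduce everything to a one-dimensional oscillatory integral with a smooth phase and a smooth amplitude. By the symmetries $\varphi\mapsto\pi\pm\varphi$ and $\varphi\mapsto 2\pi-\varphi$ of $\Phi_{k,\varphi}^{[p]}$ (which depends only on $\cos^{4/p}\varphi,\sin^{4/p}\varphi$), it suffices to take $\varphi$ in a compact subset $K\subset(0,\frac{\pi}{2})$. Put $m:=2/p\in\mathbb{N}$, $a:=\cos^{m}\varphi$ and $b:=\sin^{m}\varphi$. Then $a,b\in[c_K,1]$ for some $c_K>0$. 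For $p=2$ the statement is the classical estimate (\ref{J-asym.}) for $J_0$, so I assume $0<p<1$, i.e.\ $m\ge3$.

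In the representation of $\mathcal{J}_{0,\varphi}^{[p]}$ I substitute $u=\sin^{m}\theta$. This gives $(1-u^p)^{1/p}=\cos^{m}\theta$ and $(1-u^{p})^{\frac1p-1}du=m\,\sin^{m-1}\theta\cos^{m-1}\theta\,d\theta$. Writing the product of cosines as half a sum, I get
\begin{equation*}
\mathcal{J}_{0,\varphi}^{[p]}(r)=\frac{2m}{p\Gamma(\frac1p)^2}\sum_{\pm}\int_0^{\pi/2}\cos\bigl(r\psi_\pm(\theta)\bigr)\,(\sin\theta\cos\theta)^{m-1}d\theta,\qquad \psi_\pm(\theta)=a\cos^{m}\theta\pm b\sin^{m}\theta .
\end{equation*}
Because $m$ is an integer, both phase and amplitude are real-analytic on $[0,\frac{\pi}{2}]$, and the singularities of the $u$-form disappear; this is exactly where $2/p\in\mathbb{N}$ is used. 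We have $\psi_\pm'(\theta)=m\sin\theta\cos\theta\,(-a\cos^{m-2}\theta\pm b\sin^{m-2}\theta)$. Hence $\psi_-$ has no interior critical point. The phase $\psi_+$ has exactly one interior critical point $\theta_0$, determined by $\tan^{m-2}\theta_0=a/b$. Since $a/b$ ranges over a compact subset of $(0,\infty)$, $\theta_0$ stays in a compact subinterval of $(0,\frac{\pi}{2})$ uniformly in $\varphi\in K$. There
\begin{equation*}
\psi_+''(\theta_0)=m(m-2)\sin\theta_0\cos\theta_0\bigl(a\cos^{m-3}\theta_0\sin\theta_0+b\sin^{m-3}\theta_0\cos\theta_0\bigr)\ne0,
\end{equation*}
which again holds with a lower bound uniform on $K$. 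This reflects the nonvanishing curvature of the $p$-circle away from the axes.

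Next I split $[0,\frac{\pi}{2}]$ with a smooth partition of unity into three parts: a neighbourhood of $\theta_0$, neighbourhoods of the endpoints $0$ and $\frac{\pi}{2}$, and the rest. On the rest $|\psi_\pm'|$ is bounded below uniformly in $\varphi\in K$, so integration by parts gives $\mathcal{O}(r^{-1})$. Near $\theta_0$, the van der Corput lemma (second-derivative version) with the uniform bound $|\psi_+''|\ge c$ and the bounded variation of the amplitude gives $\mathcal{O}(r^{-1/2})$. Near $\theta=0$ we have $\psi_\pm(\theta)=a-\frac{am}{2}\theta^2+\mathcal{O}(\theta^{3})$ for $m\ge3$, which is a nondegenerate quadratic phase with coefficient bounded away from zero. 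The amplitude vanishes like $\theta^{m-1}$ there, so the endpoint contribution is $\mathcal{O}(r^{-m/2})=\mathcal{O}(r^{-1/2})$. The endpoint $\frac{\pi}{2}$ is treated the same way with the roles of $a$ and $b$ exchanged.

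The main obstacle is making every constant uniform in $\varphi$. This means the location of $\theta_0$, the lower bounds for $|\psi_+''|$ near $\theta_0$ and for $|\psi'_\pm|$ away from the critical points and endpoints, and the size of the quadratic coefficient at the endpoints. All of these depend continuously on $(a,b)$ and are nonzero exactly when $a,b>0$. They therefore have positive minima on the compact set $K$, and they degenerate precisely as $\varphi$ approaches an axis. This explains why the uniform estimate of Proposition \ref{uni-zero-ord} is weaker.
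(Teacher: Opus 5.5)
The paper itself gives no proof of this proposition. It is quoted from \cite[Theorem 1.4]{K2}, and the only hint of the argument is the remark after Proposition \ref{uni-zero-ord} that $2/p\in\mathbb{N}$ ensures a non-degenerate phase in an oscillatory-integral representation. Your proposal is therefore an independent, self-contained route, and it is correct. The checks are as follows.
\begin{itemize}
\item The substitution $u=\sin^{m}\theta$ in the $\omega=0$ formula of Theorem \ref{matcal-J-int0} gives exactly your representation, with constant $2m/(p\Gamma(\frac{1}{p})^{2})$.
\item The factorisation of $\psi_\pm'$, the uniqueness of $\theta_0$ (monotonicity of $\tan^{m-2}$ for $m\ge3$), the value of $\psi_+''(\theta_0)$ and the endpoint expansions all check out.
\item Every constant is continuous in $(a,b)$ on a compact set. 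Uniformity therefore follows once the cutoff around the moving point $\theta_0$ is taken of fixed width.
\item There is no circularity, since Theorem \ref{matcal-J-int0} is derived directly from the series (\ref{2-J}).
\end{itemize}
Compared with the bare citation, your route derives the estimate from this paper's own integral representation and makes the hypotheses visible. The factor $m(m-2)$ in $\psi_+''(\theta_0)$ is exactly where $p=1$ breaks down. For $m=2$ one has $\psi_+=a\cos^2\theta+b\sin^2\theta$, which is constant at $\varphi=\frac{\pi}{4}$, the direction normal to an edge of $|x_1|+|x_2|=1$.

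As far as I can see, integrality of $m$ is a convenience rather than a necessity. Neither the derivation of Theorem \ref{matcal-J-int0} nor your van der Corput steps use more than $m>2$: those steps need only $\psi_\pm\in C^{2}([0,\frac{\pi}{2}])$ and an amplitude of bounded variation. So your argument also covers non-integer $2/p$ if the statement's ``$0<p<1$'' is read literally.

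Two small remarks.

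First, at the endpoints you only need $\mathcal{O}(r^{-1/2})$. The second-derivative van der Corput bound on $[0,\delta]$, where $|\psi_\pm''|\ge am/2$ uniformly, gives this at once. The sharper $\mathcal{O}(r^{-m/2})$ is true, but it needs an extra argument to be made uniform in $\varphi$.

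Second, your closing sentence is a heuristic (the paper makes the same suggestion), not a consequence of your proof. Degenerating constants show only that this method loses uniformity near the axes, not that $\mathcal{J}_{0,\varphi}^{[p]}$ decays more slowly there.
\begin{itemize}
\item On the axes themselves, Theorem \ref{AFCA} with $\omega=0$ gives decay of order $r^{-\min\{1+p,1/p\}}$, which is faster than $r^{-1/2}$.
\item As $\varphi$ approaches an axis, $\theta_0$ runs into an endpoint where your amplitude $(\sin\theta\cos\theta)^{m-1}$ vanishes.
\end{itemize}
So the true behaviour near the axes is not obviously worse.
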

  
  \subsection{\normalsize{Integral Representations of the p-Bessel Functions (Proof of Theorem \ref{matcal-J-int0})}}
  \hspace{13pt}In this subsection, for $p$ satisfying $2/p\in\mathbb{N}$, we derive the integral representation of the $p$-Bessel functions (Theorem \ref{matcal-J-int0}) from the two-variable extension of $J_{\omega}^{[p]}(x)$, which generalizes the previously one-variable function $\mathcal{J}_{\omega,\varphi}^{[p]}(r)$ by allowing the formerly fixed angle $\varphi$ to vary (\cite[(1.4)]{K3}). Here,
  \begin{equation*}
  x = (\mathrm{sgn}(\cos\varphi) r|\cos\varphi|^{2/p}, \mathrm{sgn}(\sin\varphi) r|\sin\varphi|^{2/p})\quad\text{and }|x|_{p}:=(|x_{1}|^{p}+|x_{2}|^{p})^{\frac{1}{p}},
  \end{equation*}
  \begin{equation}\label{2-J}
    J_{\omega}^{[p]}(x)=\frac{(\frac{2}{p})^{2}|x|_{p}^{\omega}}
      {p^{\omega}\Gamma(\frac{1}{p})^{2}}\sum_{m_{1}=0}^{\infty}\sum_{m_{2}=0}^{\infty}\frac{(-1)^{m_{1}+m_{2}}}{\Gamma(\frac{2}{p}(m_{1}+m_{2}+1)+\omega)}\frac{\Gamma(\frac{2m_{1}+1}{p})\Gamma(\frac{2m_{2}+1}{p})}{(2m_{1})!\ (2m_{2})!}x_{1}^{2m_{1}}x_{2}^{2m_{2}}.
  \end{equation}
  This representation will serve as a useful tool in the study of the asymptotic behavior of the $p$-Bessel functions of general order $\omega \geq 0$. In particular, in the next subsection, the asymptotic formula for these functions along the coordinate axes is obtained by using the oscillatory integral representation derived below.\par
  \begin{proof}[Proof of Theorem \ref{matcal-J-int0}]
   By absolute convergence of the series and Fubini's theorem, we may interchange the order of summation and integration as follows.
  \begin{align*}
      J_{\omega}^{[p]}(x)&=\frac{(\frac{2}{p})^{2}|x|_{p}^{\omega}}
      {p^{\omega}\Gamma(\frac{1}{p})^{2}}\sum_{m_{1}=0}^{\infty}\sum_{m_{2}=0}^{\infty}\frac{(-1)^{m_{1}+m_{2}}x_{1}^{2m_{1}}x_{2}^{2m_{2}}\Gamma(\frac{2}{p}m_{1}+\frac{1}{p})}{(2m_{1})!\ (2m_{2})!}\biggl(\frac{\Gamma(\frac{2}{p}m_{2}+\frac{1}{p})}{\Gamma(\frac{2}{p}(m_{1}+m_{2}+1)+\omega)}\biggr)\\
      &=\frac{(\frac{2}{p})^{2}|x|_{p}^{\omega}}{p^{\omega}\Gamma(\frac{1}{p})^{2}}\sum_{m_{1}=0}^{\infty}\sum_{m_{2}=0}^{\infty}\frac{(-1)^{m_{1}+m_{2}}x_{1}^{2m_{1}}x_{2}^{2m_{2}}\Gamma(\frac{2}{p}m_{1}+\frac{1}{p})}{(2m_{1})!\ (2m_{2})!\ \Gamma(\frac{2}{p}m_{1}+\frac{1}{p}+\omega)}\int_{0}^{1}t^{\frac{2}{p}m_{1}+\frac{1}{p}+\omega-1}(1-t)^{\frac{2}{p}m_{2}+\frac{1}{p}-1}dt\\
      &=\frac{(\frac{2}{p})^{2}|x|_{p}^{\omega}}{p^{\omega}\Gamma(\frac{1}{p})^{2}}\int_{0}^{1}\biggl(\sum_{m_{1}=0}^{\infty}\frac{(-1)^{m_{1}}\Gamma(\frac{2}{p}m_{1}+\frac{1}{p})(x_{1}t^{\frac{1}{p}})^{2m_{1}}}{(2m_{1})!\ \Gamma(\frac{2}{p}m_{1}+\frac{1}{p}+\omega)}\biggr)\biggl(\sum_{m_{2}=0}^{\infty}\frac{(-1)^{m_{2}}}{(2m_{2})!}(x_{2}(1-t)^{\frac{1}{p}})^{2m_{2}}\biggr)\\
      &\hspace{280pt}\times t^{\frac{1}{p}+\omega-1}(1-t)^{\frac{1}{p}-1}dt.
  \end{align*}\par
  Furthermore, for $\omega>0$, by using $\Gamma(2m_{1}/p+1/p)/\Gamma(2m_{1}/p+1/p+\omega)=\int_{0}^{1}s^{2m_{1}/p+1/p-1}(1-s)^{\omega-1}ds/\Gamma(\omega)$,\vspace{-5pt}
  \begin{align*}
 \sum_{m_{1}=0}^{\infty}\frac{(-1)^{m_{1}}\Gamma(\frac{2}{p}m_{1}+\frac{1}{p})(x_{1}t^{\frac{1}{p}})^{2m_{1}}}{(2m_{1})!\ \Gamma(\frac{2}{p}m_{1}+\frac{1}{p}+\omega)}&=\frac{1}{\Gamma(\omega)}\int_{0}^{1}\biggl(\sum_{m_{1}=0}^{\infty}\frac{(-1)^{m_{1}}}{(2m_{1})!}(x_{1}(st)^{\frac{1}{p}})^{2m_{1}}\biggr)s^{\frac{1}{p}-1}(1-s)^{\omega-1}ds\\
 &=\frac{1}{\Gamma(\omega)}\int_{0}^{1}\cos(x_{1}(st)^{\frac{1}{p}})s^{\frac{1}{p}-1}(1-s)^{\omega-1}ds
 \end{align*}
  can be expressed, and (although $x_{1}$ and $x_{2}$ are symmetric, for convenience) it can be combined into the following integral representation. 
  \begin{equation}\label{J-int0}
     \hspace{-10pt} J_{\omega}^{[p]}(x)=
      \begin{cases}\displaystyle
        \frac{(\frac{2}{p})^{2}|x|_{p}^{\omega}}{p^{\omega}\Gamma(\omega)\Gamma(\frac{1}{p})^{2}}\int_{0}^{1}\Bigl(\int_{0}^{1}\cos(x_{1}(st)^{\frac{1}{p}})s^{\frac{1}{p}-1}(1-s)^{\omega-1}ds\Bigr)\cos(x_{2}(1-t)^{\frac{1}{p}})t^{\frac{1}{p}+\omega-1}(1-t)^{\frac{1}{p}-1}dt\\
        \hspace{350pt}\text{if }\omega>0,\\ \displaystyle
        \frac{(\frac{2}{p})^{2}}{\Gamma(\frac{1}{p})^{2}}\int_{0}^{1}\cos(x_{1}t^{\frac{1}{p}})\cos(x_{2}(1-t)^{\frac{1}{p}})t^{\frac{1}{p}-1}(1-t)^{\frac{1}{p}-1}dt\quad\text{if }\omega=0.
      \end{cases}
  \end{equation}\par
  Moreover, by making the change of variables $u=(1-t)^{1/p}$ (that is, $(1-t)^{1/p-1}dt=-pdu$, $t=1-u^{p}$), it can also be expressed in the following form. 
  \begin{equation}\label{J-int}
     \hspace{-10pt} J_{\omega}^{[p]}(x)=
      \begin{cases}\displaystyle
        \frac{(\frac{2}{p})^{2}|x|_{p}^{\omega}}{p^{\omega-1}\Gamma(\omega)\Gamma(\frac{1}{p})^{2}}\int_{0}^{1}\Bigl(\int_{0}^{1}\cos(x_{1}(s(1-u^{p}))^{\frac{1}{p}})s^{\frac{1}{p}-1}(1-s)^{\omega-1}ds\Bigr)\cos(x_{2}u)(1-u^{p})^{\frac{1}{p}+\omega-1}du\\
        \hspace{350pt}\text{if }\omega>0,\\ \displaystyle
        \frac{4}{p\Gamma(\frac{1}{p})^{2}}\int_{0}^{1}\cos(x_{1}(1-u^{p})^{\frac{1}{p}})\cos(x_{2}u)(1-u^{p})^{\frac{1}{p}-1}du\quad\text{if }\omega=0.
      \end{cases}
  \end{equation}
  \end{proof}
  
  \subsection{\normalsize{Asymptotic Formula on the Coordinate Axes (Proof of Theorem \ref{AFCA})}}
  \hspace{13pt}In this subsection, we derive an asymptotic formula for the $p$-Bessel function of a non-negative order along the coordinate axes (Theorem \ref{AFCA}) by analyzing the contributions from the endpoints of its integral representation. In particular, we apply the oscillatory analogue of \textit{Watson's lemma} to the endpoints and combine the resulting contributions to clarify the asymptotic behavior on the axes.
\begin{proof}[Proof of Theorem \ref{AFCA}]
Let $p$ be a positive real number satisfying $2/p \in\mathbb{N}\setminus\{2\}$, and let $\omega\geq0$ denote the order of the function. We consider the asymptotic behavior of $J_{\omega}^{[p]}$ along the axes $\varphi_{\mathrm{axis}}\in\{0,\pi/2,\pi,3\pi/2\}$ as $r\to\infty$. Among the corresponding points $x_{\mathrm{axis}}\in\{(r,0),(0,r),(-r,0),(0,-r)\}$, taking into account the choice of axis, the form of the integral representation, and the symmetry, we restrict our attention to the case $x_{\mathrm{axis}}=(0,r)$ in what follows. Hence, the integral representation (\ref{J-int}) can be rewritten as follows.
  \begin{equation*}
    J_{\omega}^{[p]}(x_{\mathrm{axis}})=\frac{(\frac{2}{p})^{2}r^{\omega}}{p^{\omega-1}\Gamma(\omega+\frac{1}{p})\Gamma(\frac{1}{p})}\int_{0}^{1}\cos(ru)(1-u^{p})^{\frac{1}{p}+\omega-1}du \qquad\text{for }\omega\geq0.
  \end{equation*}
  Then, since the phase function of this oscillatory integral has no stationary points in the interior of the integration interval, the asymptotic evaluation of the oscillatory integral reduces to the analysis of the endpoint contributions rather than to an application of the stationary phase method. Consequently, the leading contribution appears to be governed by the behavior near the endpoints. \par\vspace{5pt}
  Now, fix $0<\delta<1/2$, and choose a smooth cutoff function $\chi_{0}\in C^{\infty}([0,1])$ such that 
\begin{equation*}
  \chi_{0}(u)=1\quad\text{for }0\leq u\leq\delta,\qquad \chi_{0}(u)=0\quad\text{for }2\delta\leq u\leq1.
\end{equation*}
Furthermore, we define $\chi_{1}(u):=1-\chi_{0}(u)$, then $\chi_{0}+\chi_{1}=1$ holds. Hence, we can set
  \begin{align}
    J_{\omega}^{[p]}(x_{\mathrm{axis}})&=C_{\omega}^{[p]}r^{\omega}(I_{0}(r)+I_{1}(r))\quad\text{with } C_{\omega}^{[p]}:=\frac{(\frac{2}{p})^{2}}{p^{\omega-1}\Gamma(\omega+\frac{1}{p})\Gamma(\frac{1}{p})},\label{axis-int}\\
    I_{j}(r)&:=\int_{0}^{1}\cos(ru)\chi_{j}(u)(1-u^{p})^{\alpha}du\quad \text{for }j=0\text{ or }1.\notag
  \end{align}
  Note that $\chi_{j}(u)(1-u^{p})^{\alpha}$ with $\alpha:=1/p+\omega-1$ is the amplitude function. \par\vspace{5pt}
  We first consider the contribution from the endpoint $u=1$. Let $\widetilde{\chi}_{1}(t):=\chi_{1}(1-t)$ for $t:=1-u$. Since $\chi_{1}(u)=0$ for $0\leq u\leq\delta$ and $\chi_{1}(u)=1$ for $2\delta\leq u\leq1$, we have 
  \begin{equation*}
  \widetilde{\chi}_{1}(t)=1\quad\text{for }0\leq t\leq1-2\delta,\qquad \widetilde{\chi}_{1}(t)=0 \quad\text{for }1-\delta\leq t\leq1.
\end{equation*}
  \par
  Moreover, from the binomial expansion $(1-t)^{p}=1-pt+\mathcal{O}(t^{2})$ as $t\to0+$, we obtain
\begin{equation*}
  (1-u^{p})^{\alpha}=(pt+\mathcal{O}(t^{2}))^{\alpha}=(pt)^{\alpha}(1+\mathcal{O}(t))^{\alpha}=p^{\alpha}t^{\alpha}(1+\mathcal{O}(t))=p^{\alpha}t^{\alpha}+\mathcal{O}(t^{\alpha+1})\quad\text{as }t\to0+.
\end{equation*}
  In addition, by the oscillatory analogue of \textit{Watson's lemma} (see \cite[(2.3.20)]{NIST-DLMF}), for $\beta>-1$ and $\chi\in C^{\infty}([0,\gamma])$ satisfying $\chi(\tau)=1$ near $\tau=0$ and $\chi(\tau)=0$ near $\tau=\gamma$, we obtain 
\begin{equation}\label{WL}
  \int_{0}^{\gamma}\cos(r\tau)\chi(\tau)\tau^{\beta}d\tau=\frac{\Gamma(\beta+1)}{r^{\beta+1}}\cos\frac{\pi}{2}(\beta+1)+\mathcal{O}(r^{-\beta-2})\quad\text{as }r\to\infty.
\end{equation}
  Similarly, $\int_{0}^{\gamma}\sin(r\tau)\chi(\tau)\tau^{\beta}d\tau=(\Gamma(\beta+1)/r^{\beta+1})\sin\frac{\pi}{2}(\beta+1)+\mathcal{O}(r^{-\beta-2})$ also holds. Combining these results, we can express
\begin{equation*}
  I_{1}(r)=p^{\alpha}\int_{0}^{1}\cos(r(1-t))\widetilde{\chi}_{1}(t) t^{\alpha}dt+\int_{0}^{1}\cos(r(1-t))\widetilde{\chi}_{1}(t)\mathcal{O}(t^{\alpha+1})dt,
\end{equation*}\vspace{-10pt}
\begin{align*}
  \int_{0}^{1}\cos(r(1-t))\widetilde{\chi}_{1}(t)t^{\alpha}dt&=\cos r\int_{0}^{1}\cos(rt)\widetilde{\chi}_{1}(t)t^{\alpha}dt+\sin r\int_{0}^{1}\sin(rt)\widetilde{\chi}_{1}(t)t^{\alpha}dt\\
  &=\frac{\Gamma(\alpha+1)}{r^{\alpha+1}}\Bigl(\cos r\cos\frac{\pi}{2}(\alpha+1)+\sin r\sin\frac{\pi}{2}(\alpha+1)\Bigr)+\mathcal{O}(r^{-\alpha-2})\\
  &=\frac{\Gamma(\alpha+1)}{r^{\alpha+1}}\cos\Bigl(r-\frac{\pi}{2}(\alpha+1)\Bigr)+\mathcal{O}(r^{-\alpha-2}).
\end{align*}  
Since $\alpha+1=1/p+\omega$, we finally obtain the following result.
\begin{align}
  C_{\omega}^{[p]}r^{\omega}I_{1}(r)&=\frac{(\frac{2}{p})^{2}r^{\omega}p^{\frac{1}{p}+\omega-1}\Gamma(\frac{1}{p}+\omega)}{p^{\omega-1}\Gamma(\omega+\frac{1}{p})\Gamma(\frac{1}{p})r^{\frac{1}{p}+\omega}}\cos\Bigl(r-\frac{\pi}{2}(\frac{1}{p}+\omega)\Bigr)+r^{\omega}\cdot\mathcal{O}(r^{-\frac{1}{p}-\omega-1})\notag\\
  &=\frac{4p^{\frac{1}{p}-2}}{\Gamma(\frac{1}{p})}r^{-\frac{1}{p}}\cos\Bigl(r-\frac{\pi}{2}(\frac{1}{p}+\omega)\Bigr)+\mathcal{O}(r^{-\frac{1}{p}-1})\quad\text{as }r\to\infty.\label{u1_endpoint}
\end{align}\par
On the other hand, we next consider the contribution from the endpoint $u=0$. \par

Applying (\ref{WL}) to the terms of the binomial expansion
\begin{equation}\label{BE}
  (1-u^{p})^{\alpha}=1-\alpha u^{p}+\frac{\alpha(\alpha-1)}{2}u^{2p}+\mathcal{O}(u^{3p}),
\end{equation}
the constant term ($\beta=0$) vanishes, and the following holds as for the second and third term ($\beta=p$ and $2p$). 
\begin{align}
  \int_{0}^{1}\cos(ru)\chi_{0}(u)u^{\beta}du&=\frac{\Gamma(\beta+1)}{r^{\beta+1}}\cos\frac{\pi}{2}(\beta+1)+\mathcal{O}(r^{-\beta-2}) \quad\text{for }\beta=p\text{ or }2p,\notag\\
  C_{\omega}^{[p]}r^{\omega}I_{0}(r)&=-\alpha C_{\omega}^{[p]}\frac{\Gamma(p+1)}{r^{p+1-\omega}}\cos\frac{\pi}{2}(p+1)+\mathcal{O}(r^{\omega-2p-1})\notag\\
  &=\frac{4(\frac{1}{p}+\omega-1)\Gamma(p+1)}{p^{\omega+1}\Gamma(\omega+\frac{1}{p})\Gamma(\frac{1}{p})}r^{\omega-p-1}\sin\frac{\pi p}{2}+\mathcal{O}(r^{\omega-2p-1})\quad\text{as }r\to\infty.\label{u0_endpoint}
\end{align}
For $p=2$, particularly, by (\ref{BE}), $\cos(2k+1)\pi/2=0$ for $k\in\mathbb{N}_{0}$ and (\ref{WL}), we have
\begin{equation}\label{anyN}
  C_{\omega}^{[2]}r^{\omega}I_{0}(r)=\mathcal{O}(r^{-N})\quad\text{ for any natural number }N, \text{ as }r\to\infty.
\end{equation}
\par
Thus, by (\ref{axis-int}), (\ref{u1_endpoint}) and (\ref{u0_endpoint}), we obtain the asymptotic formula for the axis.
\begin{equation*}
  J_{\omega}^{[p]}(x_{\mathrm{axis}})=\frac{4\Gamma(p+1)}{p^{\omega+1}\Gamma(\omega+\frac{1}{p}-1)\Gamma(\frac{1}{p})}r^{\omega-p-1}\sin\frac{\pi p}{2}+\frac{4p^{\frac{1}{p}-2}}{\Gamma(\frac{1}{p})}r^{-\frac{1}{p}}\cos\Bigl(r-\frac{\pi}{2}(\frac{1}{p}+\omega)\Bigr)+\mathcal{O}(r^{-\beta(p,\omega)}).
\end{equation*}
Note $\beta(p,\omega):=\min\{-\omega+2p+1,1/p+1\}$. In particular, by (\ref{axis-int}), (\ref{u1_endpoint}) and (\ref{anyN}), we have
  \begin{equation*}\label{axis-p=2}\vspace{-2pt}
    J_{\omega}^{[2]}(x_{\mathrm{axis}})=\sqrt{\frac{2}{\pi r}}\cos\Bigl(r-\frac{2\omega+1}{4}\pi\Bigr)+\mathcal{O}(r^{-\frac{3}{2}})\qquad\text{as }r\to\infty.
  \end{equation*}
  This recovers the classical asymptotic form (\ref{J-asym.}), illustrating that the method is consistent with known results, since
$J_{\omega}^{[2]}(x_{\mathrm{axis}})=\mathcal{J}_{\omega,\pi/2}^{[2]}(r)=J_{\omega}(r)$.
\end{proof}

\begin{remark}
The asymptotic formula above reveals a fundamental difference between the $p$-Bessel functions with $p<1$ and the classical Bessel functions. In contrast to the classical case, the asymptotic behavior on the coordinate axes depends explicitly on the order $\omega$. Indeed, the contribution from the endpoint $u=0$ has order $r^{\omega-p-1}$, whereas the oscillatory contribution from the endpoint $u=1$ has amplitude of order $r^{-1/p}$. Thus, the contribution from $u=0$ is of larger order than the oscillatory contribution from $u=1$ when $\omega>p+1-1/p$.\par
For $2/p=3$, that is, $p=2/3$, this condition becomes $\omega>1/6$. On the other hand, if $2/p\geq4$, then $p\leq1/2$ and hence $p+1-1/p\leq1/2+1-2=-1/2<0$. Therefore, for $2/p\geq4$, the contribution from $u=0$ is of larger order than the oscillatory contribution from $u=1$ for every $\omega\geq0$. This order-dependent behavior is in sharp contrast to the order-independent leading decay of the classical Bessel functions.\par
This behavior also reflects the geometry of the associated $p$-circle. In particular, the cusps on the coordinate axes are reflected analytically in the non-smoothness of the amplitude function at the corresponding endpoint. The resulting fractional-power term in the local expansion of the amplitude is precisely what produces the additional order-dependent contribution.\par
The present analysis is restricted to the coordinate axes. To obtain a uniform asymptotic description with respect to the distorted angle $\varphi$, a more delicate analysis near the axes will be necessary. One possible approach is to introduce a suitable scaling of the spatial parameter, such as $x=(\delta\lambda,\lambda)$ with $\delta$ small and $\lambda\to\infty$, and to analyze the resulting oscillatory integral (\ref{J-int0}) and (\ref{J-int}) in the transition region between the axis and the non-axis directions. Another possible approach is to use the integral representations given in (2.6) and (2.7) of \cite{K3}
  \begin{equation*}
    J_{\omega}^{[p]}(x)=\frac{|x|_{p}^{\omega}}{p^{\omega-1}\Gamma(\omega)}\int_{0}^{1}J_{0}^{[p]}(\tau x)\tau(1-\tau^{p})^{\omega-1}d\tau\quad\text{for }x\in\mathbb{R}^{2}\setminus{\{0\}}, \omega>0
  \end{equation*}\vspace{-5pt}
  \begin{equation*}
    \mathcal{J}_{\omega,\varphi}^{[p]}(r)=\frac{r^{\omega}}{p^{\omega-1}\Gamma(\omega)}\int_{0}^{1}\mathcal{J}_{0,\varphi}^{[p]}(\tau r)\tau(1-\tau^{p})^{\omega-1}d\tau\quad\text{for }r>0, \omega>0
  \end{equation*}
and to combine this with uniform asymptotic information for $J_{0}^{[p]}$ (Proposition \ref{uni-zero-ord}). These approaches may provide a basis for a more complete angularly uniform asymptotic analysis.
\end{remark}
  
\section{Extension to the Complex Domain}
  \hspace{13pt}In this section, we extend the $p$-Bessel function $\mathcal{J}_{\omega,\varphi}^{[p]}$, originally defined on the nonnegative real axis for parameters $p$ satisfying $2/p\in\mathbb{N}$, to a function on $\mathbb{C}\setminus(-\infty,0]$ with complex order $\omega$ such that $\mathrm{Re}(\omega)>0$ or $\omega=0$. To this end, motivated by the form of the series representation (\ref{2-J}) and (\ref{p-Bessel-S})
  \begin{align*}
      \mathcal{J}_{\omega,\varphi}^{[p]}(r)&:=\frac{(\frac{2}{p})^{2}}{p^{\omega}\Gamma(\frac{1}{p})^{2}}\sum_{k=0}^{\infty}\frac{(-1)^{k}\ r^{2k+\omega}}{\Gamma(\frac{2}{p}(k+1)+\omega)}\sum_{m_{1}+m_{2}=k}\frac{\Gamma(\frac{2m_{1}+1}{p})\Gamma(\frac{2m_{2}+1}{p})}{(2m_{1})!\ (2m_{2})!}(\cos^{2m_{1}}\varphi\sin^{2m_{2}}\varphi)^{\frac{2}{p}}\\
      &=\Bigl(\frac{2}{p}\Bigr)^{2+\omega}\frac{\pi}{\Gamma(\frac{1}{p})^{2}}\sum_{k=0}^{\infty}\frac{(-1)^{k}}{k!\Gamma(\frac{2}{p}(k+1)+\omega)}\Bigl(\frac{r}{2}\Bigr)^{2k+\omega}\Phi_{k,\varphi}^{[p]}\quad\text{for }r\geq0,
  \end{align*}
  \begin{equation*}
      \text{with }\Phi_{k,\varphi}^{[p]}:=\sum_{n=0}^{k}\frac{(\frac{2}{p}(k+1)-1)!}{n!\ (k-n)!}\Bigl(\frac{B(\frac{2}{p}(n+\frac{1}{2}),\frac{2}{p}(k-n+\frac{1}{2}))}{B(n+\frac{1}{2},k-n+\frac{1}{2})}\Bigr)(\cos^{\frac{4}{p}}\varphi)^{n}(\sin^{\frac{4}{p}}\varphi)^{k-n},
  \end{equation*}
  we redefine the complex $p$-Bessel function of order $\omega$ (with $\mathrm{Re}(\omega)>0$ or $\omega=0$) by the series 
  \begin{align}\label{mathcalJz-1}
      \mathcal{J}_{\omega,\varphi}^{[p]}(z)&:=\frac{(\frac{2}{p})^{2}}{p^{\omega}\Gamma(\frac{1}{p})^{2}}\sum_{k=0}^{\infty}\frac{(-1)^{k}\ z^{2k+\omega}}{\Gamma(\frac{2}{p}(k+1)+\omega)}\sum_{m_{1}+m_{2}=k}\frac{\Gamma(\frac{2m_{1}+1}{p})\Gamma(\frac{2m_{2}+1}{p})}{(2m_{1})!\ (2m_{2})!}(\cos^{2m_{1}}\varphi\sin^{2m_{2}}\varphi)^{\frac{2}{p}}\\
      &=\Bigl(\frac{2}{p}\Bigr)^{2+\omega}\frac{\pi}{\Gamma(\frac{1}{p})^{2}}\sum_{k=0}^{\infty}\frac{(-1)^{k}}{k!\Gamma(\frac{2}{p}(k+1)+\omega)}\Bigl(\frac{z}{2}\Bigr)^{2k+\omega}\Phi_{k,\varphi}^{[p]}. \label{mathcalJz-2}
  \end{align}
  Note that from the expression (\ref{mathcalJz-2}), it is clear that $\mathcal{J}_{\omega,\varphi}^{[2]}(z)=J_{\omega}(z)$. \par
  Let $\mathbb{C}\setminus(-\infty,0]$ denote the complex plane cut along the negative real axis. On this domain we define the principal branch of the logarithm by
$\mathrm{Log} z = \log|z| + i\mathrm{Arg}(z)$, $\mathrm{Arg}(z)\in(-\pi,\pi]$, and we define $z^{\alpha} := e^{\alpha \mathrm{Log} z}$ for $\alpha\in\mathbb{C}$.\par
  Let $\mathbb{R}_{-}:=(-\infty,0]$. Then $\mathcal{J}_{\omega,\varphi}^{[p]}(z)$ is holomorphic on $\mathbb{C}\setminus{\mathbb{R}_{-}}$. \par\vspace{15pt}

  Now, we verify this holomorphy. Recall that $\mathrm{Log} z$ and $z^{\alpha}$ are holomorphic on $\mathbb{C}\setminus{\mathbb{R}_{-}}$; it follows that the sequence of partial sums of the series defining $\mathcal{J}_{\omega,\varphi}^{[p]}(z)$ is also holomorphic on the same domain. \par
  Next, for any compact set $V\subset\mathbb{C}\setminus{\mathbb{R}_{-}}$, let $M_{V}:=\max_{z\in V}|z|$. Writing $\omega=\omega_{1}+i\omega_{2}(\in\mathbb{C})$ and noting the following, we then have $|z^{\omega}|\leq M_{V}^{\omega_{1}}e^{|\omega_{2}|\pi}$ by assumption, $\omega_{1}\geq0$.
  \begin{equation*}
    |z^{\omega}|=|e^{\omega\mathrm{Log}z}|=|e^{\omega\log |z|+i\omega\mathrm{Arg}(z)}|=|e^{\omega_{1}\log |z|}\cdot e^{-\omega_{2}\mathrm{Arg}(z)}|=|z|^{\omega_{1}}e^{-\omega_{2}\mathrm{Arg}(z)}.
  \end{equation*}\par
  It remains to show that the series representation (\ref{mathcalJz-1}) of $\mathcal{J}_{\omega,\varphi}^{[p]}(z)$ converges uniformly on $V$, more precisely, that the rearranged series (with the order of summation interchanged) converges absolutely on $V$. Once this is established, the holomorphy of $\mathcal{J}_{\omega,\varphi}^{[p]}(z)$ follows from the preservation of holomorphy under uniformly convergent sequences of holomorphic functions. \par
  Noting the validity of inequalities
  \begin{equation*}
    \frac{\Gamma(n+\frac{k}{2})\Gamma(m+\frac{k}{2})}{\Gamma(n+m+k)}\leq\frac{\Gamma(\frac{k}{2})^{2}}{\Gamma(k)},\qquad \Gamma(k)|\omega\Gamma(\omega)|\leq|\Gamma(k+\omega)|\ (=(\textstyle\prod_{l=1}^{k-1}|l+\omega|)\ |\Gamma(\omega+1)|)
  \end{equation*}
  \begin{equation*}
  (\text{from } |l+\omega|^{2}-l^{2}=(l+\omega)(l+\overline{\omega})-l^{2}=l(\omega+\overline{\omega})+|\omega|^{2}=2l\omega_{1}+|\omega|^{2}\geq0)
  \end{equation*}
  for $k\in\mathbb{N},\ n,m\in\mathbb{N}_{0}$, and $\omega\in\mathbb{C}$ with $\mathrm{Re}(\omega)>0$, and defining $C(\omega):=1/|\omega\Gamma(\omega)|$ ($\mathrm{Re}(\omega)>0$), 1 ($\omega=0$), we see that, for $(m_{1},m_{2})\in\mathbb{N}_{0}^{2}$, the expression in 
  \begin{equation*}
    \Bigl|\frac{\Gamma(\frac{2}{p}m_{1}+\frac{1}{p})\Gamma(\frac{2}{p}m_{2}+\frac{1}{p})}{\Gamma(\frac{2}{p}(m_{1}+m_{2}+1)+\omega)}\Bigr|\leq C(\omega)\frac{\Gamma(\frac{2}{p}m_{1}+\frac{1}{p})\Gamma(\frac{2}{p}m_{2}+\frac{1}{p})}{\Gamma(\frac{2}{p}(m_{1}+m_{2}+1))}\leq C(\omega)\frac{\Gamma(\frac{1}{p})^{2}}{\Gamma(\frac{2}{p})}
  \end{equation*}
  can be bounded from above by a quantity depending only on $\omega$ and $p$, independent of $(m_{1},m_{2})$. Hence, the following result holds.
  \begin{align*}
    \sum_{m_{1}=0}^{\infty}\sum_{m_{2}=0}^{\infty}&\ \biggl|\frac{(\frac{2}{p})^{2}(-1)^{m_{1}+m_{2}}z^{2(m_{1}+m_{2})+\omega}}{p^{\omega}\Gamma(\frac{1}{p})^{2}\Gamma(\frac{2}{p}(m_{1}+m_{2}+1)+\omega)}\frac{\Gamma(\frac{2m_{1}+1}{p})\Gamma(\frac{2m_{2}+1}{p})}{(2m_{1})!\ (2m_{2})!}|\cos^{2m_{1}}\varphi\sin^{2m_{2}}\varphi|^{\frac{2}{p}}\biggr|\\
      &=\frac{4|z^{\omega}|}{|p^{\omega+2}|\Gamma(\frac{1}{p})^{2}}\sum_{m_{1}=0}^{\infty}\sum_{m_{2}=0}^{\infty}\ \biggl|\frac{\Gamma(\frac{2}{p}m_{1}+\frac{1}{p})\Gamma(\frac{2}{p}m_{2}+\frac{1}{p})}{\Gamma(\frac{2}{p}(m_{1}+m_{2}+1)+\omega)}\biggr|\frac{(|z||\cos\varphi|^{\frac{2}{p}})^{2m_{1}}(|z||\sin\varphi|^{\frac{2}{p}})^{2m_{2}}}{(2m_{1})!\ (2m_{2})!}\\
      &\leq\frac{4M_{V}^{\omega_{1}}e^{|\omega_{2}|\pi}}{p^{\omega_{1}+2}\Gamma(\frac{1}{p})^{2}}\cdot C(\omega)\frac{\Gamma(\frac{1}{p})^{2}}{\Gamma(\frac{2}{p})}\biggl(\sum_{m_{1}=0}^{\infty}\frac{(|z||\cos\varphi|^{\frac{2}{p}})^{2m_{1}}}{(2m_{1})!}\biggr)\biggl(\sum_{m_{2}=0}^{\infty}\frac{(|z||\sin\varphi|^{\frac{2}{p}})^{2m_{2}}}{(2m_{2})!}\biggr)\\
      &\leq\frac{4M_{V}^{\omega_{1}}e^{|\omega_{2}|\pi}C(\omega)}{p^{\omega_{1}+2}\Gamma(\frac{2}{p})}\ e^{M_{V}(|\cos\varphi|^{\frac{2}{p}}+|\sin\varphi|^{\frac{2}{p}})}<+\infty.
  \end{align*}
  \par
  Therefore, since the series defining $\mathcal{J}_{\omega,\varphi}^{[p]}$ converges uniformly on $V$, the desired holomorphy of $\mathcal{J}_{\omega,\varphi}^{[p]}$ on $\mathbb{C}\setminus\mathbb{R}_{-}$ is established. 
  \begin{remark}
  The results presented in Theorems \ref{E-K-derJ} and \ref{matcal-J-int0} are stated for real arguments $r\geq0$. We note, however, that both the Erd\'{e}lyi–Kober-type fractional derivative relations and the integral representation of $\mathcal{J}_{\omega,\varphi}^{[p]}$ naturally extend to the complex domain $z\in\mathbb{C}\setminus\mathbb{R}_{-}$. Indeed, the defining series and the integral representation involve only holomorphic functions and linear operations (sums, integration, differentiation), so that all identities derived in the real setting continue to hold in this complex domain.
  \end{remark}
  
  \subsection{\normalsize{Definitions of the p-Cosine and p-Sine Functions}}
  \hspace{13pt}In this subsection, we focus on the identity $J_{-1/2}(z)=\sqrt{2/\pi}z^{-1/2}\cos z$. In particular, we consider values of $p$ for which $2/p$ is a positive odd integer (equivalently, $(1/p)-(1/2)\in\mathbb{N}_{0}$). Under this assumption, we define the $p$-cosine functions (and $p$-sine functions) corresponding to the $p$-Bessel functions. Note that in the previous subsections, the order $\omega$ was assumed to be a non-negative real number. \par
  First, observing that $\sqrt{\pi}(2l-1)!!=2^{l}\Gamma(l+\frac{1}{2})$, we define the $p$-cosine function from the series
  \begin{align*}
    \sum_{k=0}^{\infty}\frac{(-1)^{k}}{k!\ \Gamma\bigl(\frac{2}{p}(k+1)-\frac{1}{p}\bigr)}\biggl(\frac{z}{2}\biggr)^{2k-\frac{1}{p}}\Phi_{k,\varphi}^{[p]}&=\biggl(\frac{2}{z}\biggr)^{\frac{1}{p}}\sum_{k=0}^{\infty}\frac{(-1)^{k}}{2^{k}k!}\biggl(\frac{\Phi_{k,\varphi}^{[p]}}{\Gamma\bigl(\frac{1}{p}(2k+1)\bigr)2^{k}}\biggr)z^{2k}\\
    &=\frac{2^{\frac{1}{p}}}{\sqrt{\pi}z^{\frac{1}{p}}}\sum_{k=0}^{\infty}\frac{(-1)^{k}}{(2k)!!}\biggl(\frac{\sqrt{\pi}\ \Phi_{k,\varphi}^{[p]}}{\Gamma\bigl(\frac{1}{p}(2k+1)\bigr)2^{k}}\biggr)z^{2k}
  \end{align*}
  as follows.
  \begin{align}\label{cos-p-1}
  \cos_{\varphi}^{[p]}(z)&:=\sum_{k=0}^{\infty}\frac{(-1)^{k}}{(2k)!!}\biggl(\frac{\sqrt{\pi}\ \Phi_{k,\varphi}^{[p]}}{\Gamma\bigl(\frac{1}{p}(2k+1)\bigr)2^{k}}\biggr)z^{2k}\\
  &=\sum_{k=0}^{\infty}\frac{(-1)^{k}\ \Phi_{k,\varphi}^{[p]}}{(2k)!!\ 2^{k}}\biggl(\frac{2^{\frac{1}{p}(2k+1)-\frac{1}{2}}}{\bigl(2\bigl(\frac{1}{p}(2k+1)-\frac{1}{2}\bigr)-1\bigr)!!}\biggr)z^{2k}\notag\\
  &=\sum_{k=0}^{\infty}\frac{(-1)^{k}}{\bigl(\frac{2}{p}(2k+1)-1\bigr)!}\biggl(\frac{2^{\frac{2}{p}k+\frac{1}{p}-\frac{1}{2}}\bigl(\frac{2}{p}(2k+1)-1\bigr)!!\ \Phi_{k,\varphi}^{[p]}}{2^{k}\ (2k)!!}\biggr)z^{2k}\notag\\
  &=\sum_{k=0}^{\infty}\frac{(-1)^{k}}{\bigl(\frac{2}{p}(2k+1)-1\bigr)!}\biggl(\frac{2^{(\frac{2}{p}-1)(2k+1)}\bigl(\frac{2}{p}(k+\frac{1}{2})-\frac{1}{2}\bigr)!\ \Phi_{k,\varphi}^{[p]}}{k!}\biggr)z^{2k}.\label{cos-p-2}
  \end{align}
  Next, we verify that $\cos_{\varphi}^{[p]}(z)$ is holomorphic on $\mathbb{C}$. \par
  Indeed, since the distorted angle coefficient (\ref{Phi-co}) can be expressed as
  \begin{equation}\label{Phi-0}
    \Phi_{k,\varphi}^{[p]}=\frac{k!\ 2^{2k}}{\pi}\sum_{m_{1}+m_{2}=k}^{m\in\mathbb{N}_{0}^{2}}\frac{\Gamma(\frac{2}{p}m_{1}+\frac{1}{p})\Gamma(\frac{2}{p}m_{2}+\frac{1}{p})}{(2m_{1})!\ (2m_{2})!}(\cos^{m_{1}}\varphi\sin^{m_{2}}\varphi)^{\frac{4}{p}},
  \end{equation}
  and the two inequalities
  \begin{equation*}
  \frac{(\frac{2}{p}(m_{1}+m_{2})+\frac{1}{p}-\frac{1}{2})!}{(2(\frac{2}{p}(m_{1}+m_{2})+\frac{1}{p}-\frac{1}{2}))!}\leq\frac{1}{(\frac{2}{p}(m_{1}+m_{2})+\frac{1}{p}-\frac{1}{2})!}=\frac{1}{\Gamma(\frac{2}{p}(m_{1}+m_{2})+\frac{1}{p}+\frac{1}{2})},
  \end{equation*}
  \begin{equation}\label{leq1}
  \frac{\Gamma(\frac{2}{p}m_{1}+\frac{1}{p})\Gamma(\frac{2}{p}m_{2}+\frac{1}{p})}{\Gamma(\frac{2}{p}(m_{1}+m_{2})+\frac{1}{p}+\frac{1}{2})}\leq\frac{\Gamma(\frac{1}{p})^{2}}{\Gamma(\frac{1}{p}+\frac{1}{2})}
  \end{equation}
  hold, we can show that the following series, obtained by rearranging the order of summation in the series (\ref{cos-p-2}), converges uniformly on compact subsets of $\mathbb{C}$. Hence the series converges locally uniformly on $\mathbb{C}$, and therefore defines an entire function. 
  \begin{align*}
    \sum^{\infty}_{m_{1}=0}\sum^{\infty}_{m_{2}=0}&\biggl|\frac{(-1)^{m_{1}+m_{2}}}{\bigl(2\cdot\frac{2}{p}(m_{1}+m_{2})+\frac{2}{p}-1\bigr)!}\biggl(\frac{2^{(\frac{2}{p}-1)(2(m_{1}+m_{2})+1)}\bigl(\frac{2}{p}(m_{1}+m_{2})+\frac{1}{p}-\frac{1}{2}\bigr)!}{(m_{1}+m_{2})!}\biggr)\\
    &\times\Bigl(\frac{(m_{1}+m_{2})!\ 2^{2(m_{1}+m_{2})}}{\pi}\Bigr)\frac{\Gamma(\frac{2}{p}m_{1}+\frac{1}{p})\Gamma(\frac{2}{p}m_{2}+\frac{1}{p})}{(2m_{1})!\ (2m_{2})!}(\cos^{m_{1}}\varphi\sin^{m_{2}}\varphi)^{\frac{4}{p}}z^{2(m_{1}+m_{2})}\biggr|\\
    &\leq\frac{2^{\frac{2}{p}-1}\Gamma(\frac{1}{p})^{2}}{\pi\Gamma(\frac{1}{p}+\frac{1}{2})}\biggl(\sum_{m_{1}=0}^{\infty}\frac{|2^{\frac{2}{p}}z|^{2m_{1}}}{(2m_{1})!}\biggr)\biggl(\sum_{m_{2}=0}^{\infty}\frac{|2^{\frac{2}{p}}z|^{2m_{2}}}{(2m_{2})!}\biggr)
    \leq\frac{2^{\frac{2}{p}-1}\Gamma(\frac{1}{p})^{2}}{\pi\Gamma(\frac{1}{p}+\frac{1}{2})}e^{2^{\frac{2}{p}+1}|z|}<+\infty.
  \end{align*}\par
  Note that (\ref{leq1}) is a special case of the following inequality for $k\in\mathbb{N},\ n,m\in\mathbb{N}_{0}$.
  \begin{equation*}\vspace{-5pt}
    \frac{\Gamma(n+\frac{k}{2})\Gamma(m+\frac{k}{2})}{\Gamma(n+m+\frac{k}{2}+\frac{1}{2})}\leq\frac{\Gamma(\frac{k}{2})^{2}}{\Gamma(\frac{k}{2}+\frac{1}{2})}.
  \end{equation*}
  \par\vspace{15pt}
Using this entire function together with the series representation (\ref{mathcalJz-2}), we obtain the $p$-Bessel function of order $-1/p$ that is holomorphic on $\mathbb{C}\setminus\mathbb{R}_{-}$.
  \begin{equation}\label{(-1/p)mathcalJ}
    \mathcal{J}_{-\frac{1}{p},\varphi}^{[p]}(z):=\frac{\pi\bigl(\frac{2}{p}\bigr)^{2-\frac{1}{p}}}{\Gamma(\frac{1}{p})^{2}}\cdot\frac{2^{\frac{1}{p}}}{\sqrt{\pi}z^{\frac{1}{p}}}\cos_{\varphi}^{[p]}(z)=\frac{4\sqrt{\pi}}{p^{2-\frac{1}{p}}\Gamma\bigl(\frac{1}{p}\bigr)^{2}}\frac{\cos_{\varphi}^{[p]}(z)}{z^{\frac{1}{p}}}.
  \end{equation}
  Indeed, it satisfies $\mathcal{J}_{-1/2,\varphi}^{[2]}(z)=\sqrt{2/\pi}z^{-1/2}\cos z=J_{-1/2}(z)$. \par
  On the other hand, in view of this property together with the identity $J_{1/2}(z)=\sqrt{2/\pi}z^{-\frac{1}{2}}\sin z$, we define the $p$-sine function as follows.
  \begin{equation}\label{sin-p-1}
    \sin_{\varphi}^{[p]}(z):=\sum_{k=0}^{\infty}\frac{(-1)^{k}}{(2k)!!}\biggl(\frac{\sqrt{\pi}\ \Phi_{k,\varphi}^{[p]}}{\Gamma\bigl(\frac{1}{p}(2k+3)\bigr)2^{k+1}}\biggr)z^{2k+1}.
  \end{equation}
  \begin{equation*}
    \text{(In particular, }\sin_{\varphi}^{[2]}(z)=\sum_{k=0}^{\infty}\frac{(-1)^{k}}{(2k)!!}\biggl(\frac{\sqrt{\pi}\cdot 1}{\Gamma\bigl(k+\frac{3}{2}\bigr)2^{k+1}}\biggr)z^{2k+1}=\sum_{k=0}^{\infty}\frac{(-1)^{k}}{(2k)!!\ (2k+1)!!}z^{2k+1}=\sin z.\ )
  \end{equation*}\par
  Indeed, the validity of this definition becomes clear by considering the representation of $\mathcal{J}_{1/p,\varphi}^{[p]}$.
  \begin{align*}
    \sum_{k=0}^{\infty}\frac{(-1)^{k}}{k!\ \Gamma\bigl(\frac{2}{p}(k+1)+\frac{1}{p}\bigr)}\biggl(\frac{z}{2}\biggr)^{2k+\frac{1}{p}}\Phi_{k,\varphi}^{[p]}&=\frac{1}{z}\biggl(\frac{z}{2}\biggr)^{\frac{1}{p}}\sum_{k=0}^{\infty}\frac{(-1)^{k}}{2^{k}k!}\biggl(\frac{\Phi_{k,\varphi}^{[p]}}{\Gamma\bigl(\frac{1}{p}(2k+3)\bigr)2^{k}}\biggr)z^{2k+1}\\
    &=\frac{z^{\frac{1}{p}-1}}{\sqrt{\pi}2^{\frac{1}{p}-1}}\sum_{k=0}^{\infty}\frac{(-1)^{k}}{(2k)!!}\biggl(\frac{\sqrt{\pi}\ \Phi_{k,\varphi}^{[p]}}{\Gamma\bigl(\frac{1}{p}(2k+3)\bigr)2^{k+1}}\biggr)z^{2k+1}\\
    &=\frac{z^{\frac{1}{p}-1}}{\sqrt{\pi}2^{\frac{1}{p}-1}}\sin_{\varphi}^{[p]}(z),
  \end{align*}
  \begin{equation*}\label{(1/p)mathcalJ}
    \mathcal{J}_{\frac{1}{p},\varphi}^{[p]}(z)=\frac{\pi\bigl(\frac{2}{p}\bigr)^{2+\frac{1}{p}}}{\Gamma(\frac{1}{p})^{2}}\cdot\frac{z^{\frac{1}{p}-1}}{\sqrt{\pi}2^{\frac{1}{p}-1}}\sin_{\varphi}^{[p]}(z)=\frac{8\sqrt{\pi}}{p^{2+\frac{1}{p}}\Gamma\bigl(\frac{1}{p}\bigr)^{2}}z^{\frac{1}{p}-1}\sin_{\varphi}^{[p]}(z).
  \end{equation*}
  Moreover, from the form $z^{1-(1/p)}\mathcal{J}_{1/p,\varphi}^{[p]}(z)$, it also follows that $\sin_{\varphi}^{[p]}$ is an entire function.
  
  \subsection{\normalsize{Poisson-type Integral Representation (Proof of Theorem \ref{GPIR})}}
  \hspace{13pt}In this subsection, we derive a counterpart for the $p$-Bessel functions $\mathcal{J}_{\omega,\varphi}^{[p]}$ corresponding to the Poisson integral representation for the Bessel function $J_{\omega}$ (see \cite{Watson}, 3$\cdot$3, (1))
  \begin{equation*}\label{PI}
    J_{\omega}(z)=\frac{1}{\sqrt{\pi}\Gamma(\omega+\frac{1}{2})}\Bigl(\frac{z}{2}\Bigr)^{\omega}\int_{0}^{\pi}\cos(z\cos\theta)\sin^{2\omega}\theta d\theta\quad\text{for }\omega>-\frac{1}{2}.
  \end{equation*}
  \begin{proof}[Proof of Theorem \ref{GPIR}]
  We first transform the series representation (\ref{mathcalJz-2}) as follows. 
  \begin{align*}
  \mathcal{J}_{\omega,\varphi}^{[p]}(z)&=\frac{\pi(\frac{2}{p})^{2+\omega}}{\Gamma(\frac{1}{p})^{2}}\sum_{k=0}^{\infty}\frac{(-1)^{k}}{k!\Gamma(\frac{2}{p}(k+1)+\omega)}\Bigl(\frac{z}{2}\Bigr)^{2k+\omega}\Phi_{k,\varphi}^{[p]}\\
  &=\frac{\pi(\frac{2}{p})^{2+\omega}}{\Gamma(\frac{1}{p})^{2}}\Bigl(\frac{z}{2}\Bigr)^{\omega}\sum_{k=0}^{\infty}\frac{(-1)^{k}\Phi_{k,\varphi}^{[p]}}{2^{2k}k!}z^{2k}\Bigl(\frac{B(\frac{2}{p}k+\frac{1}{p},\omega+\frac{1}{p})}{\Gamma(\omega+\frac{1}{p})\Gamma(\frac{2}{p}k+\frac{1}{p})}\Bigr)\\
  &=\frac{\pi(\frac{2}{p})^{2+\omega}}{\Gamma(\frac{1}{p})^{2}\Gamma(\omega+\frac{1}{p})}\Bigl(\frac{z}{2}\Bigr)^{\omega}\sum_{k=0}^{\infty}\frac{(-1)^{k}\Phi_{k,\varphi}^{[p]}}{k!}z^{2k}\Bigl(\frac{2^{-2k}}{\Gamma(\frac{2}{p}k+\frac{1}{p})}\Bigr)\int_{0}^{1}t^{\frac{2}{p}k+\frac{1}{p}-1}(1-t)^{\omega+\frac{1}{p}-1}dt\\
  &=\frac{\pi(\frac{2}{p})^{2+\omega}}{\Gamma(\frac{1}{p})^{2}\Gamma(\omega+\frac{1}{p})}\Bigl(\frac{z}{2}\Bigr)^{\omega}\sum_{k=0}^{\infty}\frac{(-1)^{k}}{(2k)!!}\Bigl(\frac{\Phi_{k,\varphi}^{[p]}}{\Gamma(\frac{1}{p}(2k+1))2^{k}}\Bigr)z^{2k}p\int_{0}^{1}s^{2k}(1-s^{p})^{\omega+\frac{1}{p}-1}ds\\
  &=\frac{\sqrt{\pi}(\frac{2}{p})^{2+\omega}p}{\Gamma(\frac{1}{p})^{2}\Gamma(\omega+\frac{1}{p})}\Bigl(\frac{z}{2}\Bigr)^{\omega}\int_{0}^{1}\biggl(
  \sum_{k=0}^{\infty}\frac{(-1)^{k}}{(2k)!!}\Bigl(\frac{\sqrt{\pi}\Phi_{k,\varphi}^{[p]}}{\Gamma(\frac{1}{p}(2k+1))2^{k}}\Bigr)(zs)^{2k}\biggr)(1-s^{p})^{\omega+\frac{1}{p}-1}ds.
  \end{align*}
  From this, we obtain the following integral representation involving the entire function $\cos_{\varphi}^{[p]}$ defined above (note that in the final equality, term-by-term integration is justified, since the series converges uniformly with respect to $s \in [0,1]$ for $z$ in compact subsets of $\mathbb{C}$). \vspace{-5pt}
  \begin{equation*}
  \mathcal{J}_{\omega,\varphi}^{[p]}(z)=\frac{\sqrt{\pi}(\frac{2}{p})^{2+\omega}p}{\Gamma(\frac{1}{p})^{2}\Gamma(\omega+\frac{1}{p})}\Bigl(\frac{z}{2}\Bigr)^{\omega}\int_{0}^{1}\cos_{\varphi}^{[p]}(zs)(1-s^{p})^{\omega+\frac{1}{p}-1}ds.
  \end{equation*}\par
  Next, by making the change of variables $s=\cos^{\frac{2}{p}}\theta$, the expression can be written as
  \begin{align*}
    \int_{0}^{1}\cos_{\varphi}^{[p]}(zs)(1-s^{p})^{\omega+\frac{1}{p}-1}ds&=\int_{\frac{\pi}{2}}^{0}\cos_{\varphi}^{[p]}(z\cos^{\frac{2}{p}}\theta)\cdot(1-\cos^{2}\theta)^{\omega+\frac{1}{p}-1}\Bigl(-\frac{2}{p}\sin\theta\cos^{\frac{2}{p}-1}\theta d\theta\Bigr)\\
    &=\frac{2}{p}\int_{0}^{\frac{\pi}{2}}\cos_{\varphi}^{[p]}(z\cos^{\frac{2}{p}}\theta)\cdot(\sin\theta)^{2\omega+\frac{2}{p}-2}(\sin\theta\cos^{\frac{2}{p}-1}\theta)d\theta\\
    &=\frac{2}{p}\int_{0}^{\frac{\pi}{2}}\cos_{\varphi}^{[p]}(z\cos^{\frac{2}{p}}\theta)\sin^{2\omega}\theta(\cos\theta\sin\theta)^{\frac{2}{p}-1}d\theta.
  \end{align*}
  Observing that the resulting integrand does not necessarily become an even function with respect to $\theta$ depending on $\omega$ (for example, when $\omega=1/4$, the factor $\sin^{2\omega}\theta$ becomes $\sqrt{\sin\theta}$, so the interval of integration cannot be extended to $[-\frac{\pi}{2},0]$), we obtain the desired integral representations as follows. In particular, the latter representation corresponds to the case $\omega=n\in\mathbb{N}_{0}$. 
  \begin{equation*}
    \mathcal{J}_{\omega,\varphi}^{[p]}(z)=\frac{\sqrt{\pi}(\frac{2}{p})^{2+\omega}2}{\Gamma(\frac{1}{p})^{2}\Gamma(\omega+\frac{1}{p})}\Bigl(\frac{z}{2}\Bigr)^{\omega}\int_{0}^{\frac{\pi}{2}}\cos_{\varphi}^{[p]}(z\cos^{\frac{2}{p}}\theta)\sin^{2\omega}\theta(\cos\theta\sin\theta)^{\frac{2}{p}-1}d\theta,
  \end{equation*}
  \begin{equation*}
    \mathcal{J}_{n,\varphi}^{[p]}(z)=\frac{\sqrt{\pi}(\frac{2}{p})^{2+n}}{\Gamma(\frac{1}{p})^{2}\Gamma(n+\frac{1}{p})}\Bigl(\frac{z}{2}\Bigr)^{n}\int_{-\frac{\pi}{2}}^{\frac{\pi}{2}}\cos_{\varphi}^{[p]}(z\cos^{\frac{2}{p}}\theta)\sin^{2n}\theta(\cos\theta\sin\theta)^{\frac{2}{p}-1}d\theta.
  \end{equation*}
\end{proof}
\begin{remark}
Although the $p$-Bessel function corresponds to a non-circular domain with cusps—namely, the astroid-type $p$-circle—it is evident from previous work and the results in the preceding section that it retains properties closely analogous to those of the classical Bessel function. On the other hand, a noteworthy aspect of the present section is that, through complex extension, a new entire function, the $p$-cosine function, has been introduced. This development allows the $p$-Bessel functions to be represented as a single-variable integral rather than a multiple integral. This advancement suggests that, by following the precedent set in the Gauss circle problem—where properties of the classical Bessel function were exploited to tackle the lattice point problem—complex-analytic methods may provide a promising framework for addressing the corresponding lattice point problem in the astroid-type $p$-circle in future work.
\end{remark}

\vspace{20pt}\hspace{-15pt}\textbf{Data availability}\quad
  No data was used for the research described in the article. \vspace{10pt}\\
\textbf{Acknowledgements}\quad
  The author is deeply grateful to Prof. Mitsuru Sugimoto for his invaluable guidance, numerous constructive suggestions, and insightful remarks on harmonic analysis. This work was also financially supported by JST SPRING, Grant Number JPMJSP2125, and the author would like to take this opportunity to thank the ``THERS Make New Standards Program for the Next Generation Researchers'' for providing excellent research conditions during the preparation of this paper.

  \itshape{
  \hspace{13pt}The author's affiliation: Graduate School of Mathematics, Nagoya University, Chikusa-ku, Nagoya 464-8602, Japan\par
  The author's email address: kitajima.masaya.p2@a.mail.nagoya-u.ac.jp}
\end{document}